\documentclass[11pt]{article}
\usepackage[T1]{fontenc}
\usepackage{lmodern}
\usepackage[utf8]{inputenc}
\usepackage[english]{babel}
\usepackage[a4paper,margin=30mm]{geometry}
\usepackage{amsmath,amssymb,amsthm,mathtools}
\usepackage{microtype,enumitem,booktabs,xcolor,array,tabularx}
\usepackage[colorlinks=true,linkcolor=blue!50!black,citecolor=blue!50!black,urlcolor=blue!50!black]{hyperref}
\hypersetup{pdftitle={Small-speed solitary-wave stability and global well-posedness for a Bessel-regularized Airy equation},pdfauthor={Germán Preciado López}}
\newtheorem{theorem}{Theorem}[section]
\newtheorem{proposition}[theorem]{Proposition}
\newtheorem{lemma}[theorem]{Lemma}

\theoremstyle{definition}

\numberwithin{equation}{section}
\newcommand{\R}{\mathbb R}
\newcommand{\la}[1]{\langle #1\rangle}
\newcommand{\norm}[1]{\left\|#1\right\|}

\title{\bfseries Small-speed solitary-wave stability\\and global well-posedness for a\\Bessel-regularized Airy equation}
\author{Germán Preciado López\\\small Department of Mathematics, Universidad Nacional de Colombia\\\small Bogotá, Colombia}
\date{October 2026}
\begin{document}
\maketitle
\begin{abstract}
We prove orbital stability of minimizing solitary waves at sufficiently
small positive speeds for an Airy equation with a quadratic interaction
filtered by a Bessel potential of order one. The stability hypotheses
are verified: constrained minimization gives one negative direction,
the small-speed KdV limit and operator-norm perturbation of the profile
resolvent give a one-dimensional translation kernel, and a differentiable
rescaled branch with uniformly bounded derivative gives a positive slope
of the quadratic invariant. A conserved Lyapunov functional then proves
orbital stability in the three-halves-derivative energy space and the
perturbation also gives uniqueness of centered minimizers at small speed.
The dynamical framework is supplied by a direct Bourgain-space bilinear
estimate, local well-posedness in the square-integrable class,
unconditional uniqueness from one-quarter derivative onward, and global
well-posedness in the half-derivative energy space. For every prescribed
positive speed, constrained minimization constructs smooth positive
profiles that are symmetric, strictly monotone away from their center,
and exponentially localized. At general positive speeds the stability
criterion remains conditional on nondegeneracy and the slope condition.
\end{abstract}
\medskip
\noindent\textbf{Keywords:} Bessel-regularized Airy equation; Bourgain spaces;
global well-posedness; solitary waves; orbital stability;
nondegeneracy; small-speed KdV limit.

\smallskip
\noindent\textbf{2020 Mathematics Subject Classification:}
Primary 35Q53; Secondary 35R11, 35C07, 35B35.

\section{Introduction and main results}
We consider the real-valued equation
\begin{equation}\label{eq:model}
\boxed{u_t+u_{xxx}+B(u^2)=0,\qquad x\in\R,\qquad
B=\partial_x(1-\partial_x^2)^{-1/2}}
\end{equation}
Set $J=(1-\partial_x^2)^{1/2}$. The symbol of $B$ is
\[
m(\xi)=i\xi\la{\xi}^{-1},\qquad \la{\xi}=(1+\xi^2)^{1/2}.
\]
The central question is whether the spectral and slope conditions for
orbital stability can be verified for the nonlocal profile operator
$A_c=J(c-\partial_x^2)$. We answer it for all sufficiently small positive
speeds. The proof combines the KdV limit of every centered minimizing
profile with a perturbation of the smoothing resolvent, and controls
the derivative of the rescaled branch before concluding positivity of
the slope. The evolution and variational results below supply the global
flow and the profiles required for this stability argument.

The operator is bounded on every $H^s$, but its quadratic interaction is not bounded from $L^2$ to $L^2$. Indeed, for a nonzero Schwartz function $\phi$ and $u_N(x)=N^{1/2}\phi(Nx)$,
\[
\norm{u_N}_2=\norm\phi_2,\qquad
\norm{B(u_N^2)}_2^2
=N\int\frac{N^2\eta^2}{1+N^2\eta^2}|\widehat{\phi^2}(\eta)|^2\,d\eta
\sim N\norm{\phi^2}_2^2.
\]
Thus an instantaneous spatial estimate does not close an $L^2$ fixed point. The Airy phase and the zero of $m$ at the origin provide the missing space--time bound.

\subsection{Position relative to earlier work}
The restriction-space framework originates in Bourgain's dispersive theory
\cite{bourgain}. The KdV bilinear estimate of Kenig, Ponce and Vega
\cite{kpv96} and their contraction approach to generalized KdV
\cite{kpv93} are central precedents. We prove the estimate for the
order-zero force $B(u^2)$ directly: the derivative KdV estimate is not
being asserted for this nonlinearity. Smooth approximation and persistence
have classical antecedents in Bona and Smith \cite{bonasmith}; general
dispersive and semigroup backgrounds are given in \cite{tao,pazy}.

Related two-dimensional Benjamin--Ono equations were studied by Preciado
López and Soriano Méndez \cite{preciadosorianoC,preciadosorianoS}.
Those papers concern different dispersive symbols and dimensions;
the author's filtered-slope equation \cite{preciado2026} has a different
nonlinear force and coercive invariant.

For solitary waves, concentration--compactness goes back to Lions
\cite{lions}; nonlocal variational constructions and their relation to
stability are developed in \cite{albert,arnesen,ehrnstrom}. Recent work
on Bessel-type operators and nonlocal cubic interactions appears in
Marstrander \cite{marstrander}.
The splitting proof below is elementary and is related to the functional
splitting principle of Brézis and Lieb \cite{brezislieb}.

A closer antecedent is Marstrander's theory of cyclically symmetric
Coifman--Meyer nonlinearities \cite{marstranderCM}. Indeed, the change
$\psi=J^{1/2}\phi$ transforms our profile equation into
\[
 (c-\partial_x^2)\psi=N(\psi,\psi),\qquad
 N(f,g)=J^{-1/2}\big[(J^{-1/2}f)(J^{-1/2}g)\big].
\]
Up to the Fourier convolution constant, its bilinear symbol is
\[
 n(\xi-\eta,\eta)
 =\langle\xi\rangle^{-1/2}
  \langle\xi-\eta\rangle^{-1/2}\langle\eta\rangle^{-1/2}.
\]
This symbol is separable and cyclically symmetric in
$-\xi,\xi-\eta,\eta$. With linear symbol $m(\xi)=\xi^2$, the orders
$s=s'=2$ and $r_1=r_2=r_3=-1/2$ satisfy Assumptions
$\mathfrak M$ and $\mathfrak N$ of \cite{marstranderCM}; constant
normalization factors can be absorbed into the three factors of the
cyclic sum. That theorem constructs smooth waves at every prescribed
mass $\frac12\|\psi\|_2^2=\mathcal P(\phi)$, with speed arising as a
Lagrange multiplier ($\nu=-c$ in its profile convention).
Here we prescribe $c>0$ directly and establish symmetry, localization,
and small-speed uniqueness of centered minimizers. The principal
stability contribution is the verification of nondegeneracy and the
positive slope, together with a global flow on which the Lyapunov
argument applies. We therefore place existence within the established
nonlocal variational theory and concentrate on these additional steps.

The symmetry argument uses the integral moving-plane method, as in
\cite{chenliou}, but is proved here for the particular integrable
resolvent kernel. The general stability criterion follows the
energy--momentum strategy of Grillakis, Shatah and Strauss \cite{gss}
through a direct Lyapunov proof. We establish the negative index, but
do not infer nondegeneracy or the slope of $\mathcal P$ from symmetry: both are verified at small speed by perturbing the KdV resolvent.

\subsection{What the Bessel filter changes}
The equation has the Airy dispersion of KdV, with a regularized quadratic
transport flux. Its multiplier has two distinct regimes:
\[
 \frac{i\xi}{\sqrt{1+\xi^2}}
 =i\xi-\frac{i\xi^3}{2}+O(|\xi|^5)\quad(\xi\to0),
 \qquad |m(\xi)|\longrightarrow1\quad(|\xi|\to\infty).
\]
Thus long waves see the KdV quadratic transport to leading order,
whereas short-wave nonlinear differentiation is bounded in frequency.
The multiplier is imaginary and odd: this is a conservative spatial
filter, not viscous damping, and the linear Airy dispersion is unchanged.
The unit length in $J$ sets a crossover scale. Replacing it by a
length $\ell>0$ gives $(1-\ell^2\partial_x^2)^{1/2}$ and makes that
scale explicit. No derivation from a particular natural medium is claimed.

The choice also supplies compatible analytical structures: multiplication
by $J$ cancels the filter in the cubic energy calculation; the profile
resolvent factors as $(c-\partial_x^2)^{-1}J^{-1}$; and each factor
has a positive kernel. These facts yield the global $H^{1/2}$ invariant,
symmetry, and exponential profile localization. A general bounded odd
multiplier need not retain this combination. Bessel regularization is
not the only possible conservative filter, but its symbol makes these
properties verifiable together.

The low-frequency expansion also motivates the small-speed profile
rescaling $\phi_c(x)=c q_c(\sqrt c\,x)$. Theorem~\ref{thm:kdvlimit}
justifies its KdV limit for all centered minimizers. This limit is the
starting point of the resolvent perturbation used to prove stability.

\subsection{Comparison with KdV and Benjamin--Ono}
We normalize the comparison equations as
\[
 \text{KdV: }u_t+u_{xxx}+\partial_x(u^2)=0,\qquad
 \text{BO: }u_t-\mathcal H u_{xx}+\partial_x(u^2)=0,
\]
where $\widehat{\mathcal Hh}=-i\operatorname{sgn}(\xi)\widehat h$.
Their positive-speed profile equations are
$(-\partial_x^2+c)q=q^2$ and $(|D|+c)q=q^2$, respectively.
For this normalization the familiar profiles are
\[
 q_c^{\rm KdV}(x)=\frac{3c}{2}\operatorname{sech}^2
       \left(\frac{\sqrt c\,x}{2}\right),\qquad
 q_c^{\rm BO}(x)=\frac{2c}{1+c^2x^2}.
\]
The first formula follows by direct differentiation. The second follows
from $|D|(1+x^2)^{-1}=(1-x^2)(1+x^2)^{-2}$ and scaling.
Uniqueness for the BO profile is a separate theorem of Amick and Toland
\cite{amicktolland}; it does not transfer to $J(c-\partial_x^2)$.

\begin{center}
\setlength{\tabcolsep}{5pt}
\renewcommand{\arraystretch}{1.3}
\begin{tabularx}{\textwidth}{|>{\raggedright\arraybackslash}p{.21\textwidth}|
>{\raggedright\arraybackslash}X|
>{\raggedright\arraybackslash}X|
>{\raggedright\arraybackslash}X|}
\hline
\textbf{Property} & \textbf{KdV} & \textbf{Benjamin--Ono} & \textbf{Present equation}\\\hline
Profile operator & $c-\partial_x^2$ & $c+|D|$ & $J(c-\partial_x^2)$\\\hline
Conserved quadratic quantity & $\frac12\|u\|_2^2$ & $\frac12\|u\|_2^2$ & $\frac12\|u\|_{H^{1/2}}^2$\\\hline
Action form domain & $H^1$ & $H^{1/2}$ & $H^{3/2}$\\\hline
Positive profile tail & Exponential & Algebraic & Exponential\\\hline
Profile uniqueness & ODE classification\textsuperscript{a} & Amick--Toland \cite{amicktolland} & Centered minimizers at small speed\textsuperscript{b}\\\hline
Orbital stability & Classical precedent & Separate spectral theory & Proved at small speed\textsuperscript{b}\\\hline
\end{tabularx}
\end{center}
\noindent\textsuperscript{a}The KdV ODE classification is proved in
Theorem~\ref{thm:kdvlimit}.
\textsuperscript{b}Theorem~\ref{thm:smallspeedstability}; no assertion
at arbitrary positive speeds is made here.

The table concerns these normalizations and the indicated profile
families, not all possible solutions. Low-regularity KdV theory extends
below $L^2$ \cite{kpv96}, and BO has a global $L^2$ theory
\cite{ionescukenig}. Our $L^2$ local result is therefore not a new
threshold for either classical equation; nor does the conserved
$H^{1/2}$ norm provide global control of arbitrary $L^2$ data.
We extend variational, symmetry and stability arguments to
a different profile operator, not the integrable structure or complete
classification of the classical models.

\subsection{Statements}
The Fourier transform is unitary with phase $e^{-ix\xi}$, and
$\norm u_{H^s}^2=\int\la\xi^{2s}|\widehat u|^2$. Let $S(t)$ be the unitary Airy group, $\widehat{S(t)h}=e^{it\xi^3}\widehat h$. For a fixed $1/2<b<1$, set
\begin{equation}\label{eq:X}
\norm u_{X^{s,b}}=\norm{\la\xi^s\la{\tau-\xi^3}^b\widetilde u(\xi,\tau)}_2.
\end{equation}
Restriction norms are defined in Section~\ref{sec:cauchy}.

\begin{theorem}\label{thm:flow}
For each $s\ge0$ and $u_0\in H^s(\R)$, the equation has a local mild solution in $C([-T,T];H^s)\cap X^{s,b}([-T,T])$. Solutions are unique in the restriction class $X^{0,b}$; for $s\ge1/4$, uniqueness holds among all mild solutions in $C([-T,T];H^s)$, without a restriction-space assumption. The time is uniform on bounded sets of data, the flow is locally Lipschitz on a common interval, and higher nonnegative Sobolev regularity persists on the interval of the lower-regularity solution.

For real data $u_0\in H^s$, $s\ge1/2$, the solution is global and
\begin{equation}\label{eq:Pconservation}
\mathcal P(u(t))=\mathcal P(u_0),\qquad
\mathcal P(u)=\frac12\la{Ju,u}=\frac12\norm u_{H^{1/2}}^2.
\end{equation}
In particular, the global $H^{1/2}$ solution is unconditionally unique
among $C_tH^{1/2}$ mild solutions. No global assertion for arbitrary
$L^2$ data is made.
\end{theorem}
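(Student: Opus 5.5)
The plan is the standard Bourgain-space contraction, built on one bilinear estimate and followed by a conservation law. The core estimate is
\[
\norm{B(uv)}_{X^{s,b-1}}\lesssim \norm u_{X^{s,b}}\norm v_{X^{s,b}},\qquad s\ge0,
\]
where $b$ and $b'$ are chosen near $1/2$ so that time localization gains a factor $T^{\theta}$.

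By duality, the case $s=0$ reduces to bounding
\[
\int \frac{|\xi|}{\la\xi}\,\frac{f(\xi_1,\tau_1)g(\xi_2,\tau_2)h(\xi,\tau)}{\la{\sigma_1}^b\la{\sigma_2}^b\la{\sigma}^{1-b}},\qquad \sigma_j=\tau_j-\xi_j^3 .
\]
The key algebraic input is the resonance identity $\sigma-\sigma_1-\sigma_2=-3\xi\xi_1\xi_2$, which forces $\max|\sigma_j|\gtrsim|\xi\xi_1\xi_2|$. I split into cases:
\begin{itemize}[nosep]
\item $|\xi|\lesssim1$: the multiplier $|\xi|/\la\xi\approx|\xi|$ vanishes at the origin. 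Its zero, together with the KPV-type estimate for $\partial_x(uv)$ at low output frequency, closes this case.
\item $|\xi|\gtrsim1$: the multiplier is $\le1$, so I only need a weighted $L^2$ estimate. If $|\xi_1|,|\xi_2|\lesssim1$, Cauchy--Schwarz in $(\xi_1,\tau_1)$ suffices. Otherwise I use the standard calculus bound $\sup_{\xi,\tau}\int \la{\tau-\xi_1^3-(\xi-\xi_1)^3}^{-2b}d\xi_1<\infty$ away from the degenerate set. Near $\xi_1\approx0$ or $\xi_2\approx0$ I apply the $L^4_{xt}$ Strichartz estimate $\norm u_{L^4}\lesssim\norm u_{X^{0,b'}}$, $b'>1/3$, which suffices since no derivative must be recovered.
\end{itemize}
For $s>0$, the inequality $\la\xi^s\lesssim\la{\xi_1}^s+\la{\xi_2}^s$ moves the weight onto an input and reduces to $s=0$.

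With the estimate in hand, the Duhamel map $\Phi u=\psi(t)S(t)u_0-\psi_T\int_0^tS(t-t')B(u^2)dt'$ contracts on a ball of $X^{s,b}_T$ whose radius depends on $\norm{u_0}_{H^s}$, with time $T\sim\norm{u_0}^{-1/\theta}$. This yields uniform existence time on bounded sets, the Lipschitz flow, and uniqueness in $X^{0,b}$. Persistence of higher regularity follows because the $X^{s,b}$ estimate is tame, linear in the high norm with coefficient given by the $X^{0,b}$ norm. The $H^s$ norm therefore cannot blow up before the $L^2$ solution ceases, by iterating on the common step size.

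For unconditional uniqueness at $s\ge1/4$, I would show that any mild solution $u\in C_tH^s$ lies in $X^{s,b-1}$ via its equation. The step is to prove $\norm{B(u^2)}_{L^2_tH^{-1}}$-type bounds using $|m|\le1$ and the Sobolev product $H^{1/4}\cdot H^{1/4}\subset L^2$, which holds since $H^{1/4}\subset L^4$. This gives $B(u^2)\in L^2_tL^2_x\subset X^{0,0}$. The Duhamel formula then places $u$ in $X^{0,1}_T\supset X^{0,b}_T$, and uniqueness in $X^{0,b}$ applies. This embedding argument is short precisely because the filter is order zero.

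Global existence for $s\ge1/2$ comes from conservation of $\mathcal P$. For smooth data, $\frac{d}{dt}\la{Ju,u}=-2\la{J B(u^2),u}=-2\la{\partial_x(u^2),u}=-2\int (u^2)_xu=0$, since $JB=\partial_x$ and $J$ is self-adjoint. The Airy term is skew and contributes nothing. Approximating by smooth data and using the Lipschitz flow and persistence passes conservation to $H^{1/2}$. Since the local time depends only on $\norm{u_0}_{2}\le\norm{u_0}_{H^{1/2}}$, the solution iterates globally. Higher $s$ globalizes by persistence, and unconditional uniqueness follows from the $s\ge1/4$ statement.

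The main obstacle is the bilinear estimate in the high--high$\to$low and high--low interactions near the resonance degeneracies. I would first try the dyadic decomposition combined with the $L^4$ Strichartz and bilinear smoothing estimates $\norm{D^{1/2}\cdot}$ of KPV. There the loss is only $|\xi|/\la\xi\le1$, much milder than for KdV.
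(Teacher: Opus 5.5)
Your overall architecture is the paper's. It consists of a contraction in $X^{0,b}$ built from one bilinear bound and the Duhamel estimate, and tame persistence. Unconditional uniqueness for $s\ge1/4$ comes from $H^{1/4}\subset L^4$, $|m|\le1$, and placing any $C_tH^s$ mild solution in $X^{0,b}$ through its Duhamel formula. Globalization comes from conservation of $\mathcal P$ via $JB=\partial_x$.

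The genuine difference is the bilinear estimate. The paper proves the stronger bound $\|B(uv)\|_{L^2_{t,x}}\lesssim\|u\|_{X^{0,b}}\|v\|_{X^{0,b}}$ by a single Cauchy--Schwarz. After integrating in $\tau_1$, the identity $\xi_1^3+(\xi-\xi_1)^3=\xi^3/4+3\xi(\xi_1-\xi/2)^2$ gives $\int\langle\tau-\xi_1^3-(\xi-\xi_1)^3\rangle^{-2b}\,d\xi_1\lesssim|\xi|^{-1/2}$ uniformly in $\tau$. The squared symbol $\xi^2/(1+\xi^2)$ absorbs this blow-up at $\xi=0$. The factor $T^{1-b}$ then comes directly from the $L^2_tH^s\to X^{s,b}$ Duhamel bound, with no $b,b'$ bookkeeping. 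Your displayed $X^{s,b-1}$ target with the same $b$ on both sides yields no power of $T$ by itself.

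The same computation shows there is no degeneracy at $\xi_1\approx0$ or $\xi_2\approx0$: the calculus integral degenerates only at output frequency $\xi=0$. So the case split, the dyadic decomposition, bilinear smoothing and the KPV citation are all unnecessary, and the ``main obstacle'' you anticipate does not occur. The paper deliberately avoids citing KPV. If you do cite it at $s=0$, it covers all frequencies at once, because $|m(\xi)|\le|\xi|$ pointwise.

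Your own $L^4$ tool would in fact suffice globally. Interpolating the Airy $L^8_{t,x}$ Strichartz bound with $X^{0,0}=L^2_{t,x}$ gives $\|B(uv)\|_{L^2}\le\|u\|_{L^4}\|v\|_{L^4}\lesssim\|u\|_{X^{0,b'}}\|v\|_{X^{0,b'}}$ for $b'>1/3$. This route uses only $|m|\le1$, not the zero of $m$, at the price of importing Strichartz theory; the paper's argument is elementary and self-contained.

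Finally, uniqueness in all of $X^{0,b}$, not only in the contraction ball, needs the short-interval difference estimate. Your $T^\theta$ factor supplies it, but the step should be stated.
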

We call $\mathcal P$ the quadratic invariant, to distinguish it from the
usual $L^2$ quadratic quantity of KdV. It also has the translation-generator
role associated with this equation's Hamiltonian operator: for smooth
$u$, $\mathcal P'(u)=Ju$ and $B\mathcal P'(u)=\partial_xu$.
The term energy--momentum method below refers to this Hamiltonian role.

\begin{theorem}\label{thm:waves}
For every $c>0$ there is a nonzero real profile $\phi_c\in\bigcap_{r\ge0}H^r(\R)\cap L^1(\R)$ with $\phi_c>0$, $\phi_c(x)\to0$ as $|x|\to\infty$, and
\begin{equation}\label{eq:profile}
A_c\phi_c=\phi_c^2,\qquad A_c=J(c-\partial_x^2),\qquad
c\int\phi_c=\int\phi_c^2.
\end{equation}
Then $u(t,x)=\phi_c(x-ct)$ solves the equation. The minimizing sequences used to construct these profiles are compact in $H^{3/2}$ modulo translations. Every such positive profile is even about a point and strictly decreasing away from that point. For every $0<\beta<\min\{\sqrt c,1\}$ there is $C_{c,\beta}$ such that $\phi_c(x)\le C_{c,\beta}e^{-\beta|x|}$ after centering. For sufficiently small $c>0$, the centered minimizing profile is unique and its translation orbit is orbitally stable in $H^{3/2}$, by Theorem~\ref{thm:smallspeedstability}. For general positive speeds the criterion in Theorem~\ref{thm:conditionalstability} remains conditional; uniqueness of all positive profiles is not asserted.
\end{theorem}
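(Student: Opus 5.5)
The plan is to construct $\phi_c$ by constrained minimization of the action form associated with $A_c$. Set
\[
I_c(\phi)=\tfrac12\la{A_c\phi,\phi}=\tfrac12\int\la\xi(c+\xi^2)|\widehat\phi|^2\,d\xi ,
\]
which is equivalent to $\norm\phi_{H^{3/2}}^2$ for fixed $c>0$, and minimize $I_c$ on the set $\{\phi\in H^{3/2}:\int\phi^3=1\}$. Note that $\phi^3\in L^1$ by the embedding $H^{3/2}\subset L^2\cap L^\infty$. Write $M_c$ for the infimum. It is positive because $\int|\phi|^3\le C\norm\phi_{H^{3/2}}^3$. Since $I_c$ is unchanged under $\phi\mapsto|\phi|$ only up to an inequality, I would instead minimize $I_c(\phi)/(\int\phi^3)^{2/3}$ over functions with $\int\phi^3>0$.

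Compactness modulo translations comes from concentration--compactness \cite{lions}. Vanishing is excluded by Lions' lemma: $\norm\phi_3\to0$ whenever $\sup_y\norm{\phi}_{L^2(y-1,y+1)}\to0$ with $H^{3/2}$ bounds. Dichotomy is excluded by strict subadditivity, which follows from the homogeneity $M(\lambda)=\lambda^{2/3}M(1)$ together with a splitting of $I_c$. The splitting is the delicate point because $A_c$ is nonlocal. I would cut off with $\chi(\cdot/R)$ and use the commutator bound $\norm{[J^{1/2}\la{\partial_x}, \chi_R]}_{H^{1}\to L^2}=O(R^{-1})$. Together with the Brézis--Lieb-type splitting of the cubic term, this gives the asymptotic decoupling.

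A translated minimizing subsequence therefore converges strongly in $H^{3/2}$. The Lagrange multiplier rule gives $A_c\phi=\lambda\phi^2$ with $\lambda>0$, and $\phi_c=\lambda\phi$ solves \eqref{eq:profile}. The identity $c\int\phi_c=\int\phi_c^2$ follows by integrating the profile equation: the symbol of $A_c$ at $\xi=0$ is $c$, so $\widehat{A_c\phi}(0)=c\widehat\phi(0)$, once $\phi_c\in L^1$ is known.

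For smoothness, bootstrap with $\phi=A_c^{-1}(\phi^2)$: $A_c^{-1}$ gains three derivatives and $H^r$ is an algebra for $r>1/2$, so $\phi\in\bigcap_r H^r$. For positivity, $A_c^{-1}=(c-\partial_x^2)^{-1}J^{-1}$ is convolution with $G=K_c*K_0$. Here $K_c=\frac{1}{2\sqrt c}e^{-\sqrt c|x|}$, and $K_0$ is the Bessel kernel $\frac1\pi K_0(|x|)$; both are positive, integrable and even. Replacing a minimizer by $|\phi|$ keeps $\int|\phi|^3\ge\int\phi^3$. Moreover $\la{A_c|\phi|,|\phi|}\le\la{A_c\phi,\phi}$ holds after rewriting the minimization in the dual form $\sup\{\int f\,G*f\cdot\ldots\}$, and I would handle it that way. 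Then $\phi_c=G*\phi_c^2>0$ everywhere, with $\phi_c(x)\to0$ from $H^1$.

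The traveling-wave claim is a direct substitution. If $u=\phi(x-ct)$, then $-c\phi'+\phi'''+B(\phi^2)=0$. Apply $B^{-1}$ formally, noting that $B=\partial_xJ^{-1}$, so this is $\partial_x[J^{-1}\phi^2-(c-\partial_x^2)\phi]=0$. Decay then forces $J^{-1}\phi^2=(c-\partial_x^2)\phi$, which is \eqref{eq:profile}.

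For decay, I would obtain $L^1$ and exponential bounds from the kernel. $G$ decays like $e^{-\min(\sqrt c,1)|x|}$ times a polynomial factor. Fix $R$ with $\phi_c\le\varepsilon$ for $|x|>R$. Then $\phi_c\le \varepsilon G*\phi_c+G*(\phi_c^2\mathbf 1_{|x|<R})$, and a comparison with the weight $e^{\beta|x|}$ gives the claim for $\beta<\min(\sqrt c,1)$. The comparison is justified because $\norm{e^{\beta|\cdot|}G}_1<1/\varepsilon$ for $\varepsilon$ small, so the weighted Neumann series converges.

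Symmetry and monotonicity follow from the integral moving-plane method \cite{chenliou} applied to $\phi=G*\phi^2$. For $\lambda\in\R$, set $w_\lambda=\phi(2\lambda-x)-\phi(x)$ on the half-line $x<\lambda$. Write $\phi^2_\lambda-\phi^2=(\phi_\lambda+\phi)w_\lambda$, and use that $G(x-y)-G(x-2\lambda+y)>0$ for $x,y<\lambda$; this holds because $G$ is even and strictly decreasing in $|x|$, as both factor kernels are. The standard start-at-infinity argument uses the integrability and smallness of $\phi$ there, followed by continuation to the critical position. Together these show that every positive profile is even about some point and strictly decreasing away from it.

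The small-speed uniqueness and stability clauses are simply citations of Theorem~\ref{thm:smallspeedstability} and Theorem~\ref{thm:conditionalstability}. I expect the main obstacle to be the dichotomy exclusion for the nonlocal $H^{3/2}$ form and the positivity $I_c(|\phi|)\le I_c(\phi)$. The latter fails for a general $H^{3/2}$ norm. I would circumvent it by working in the dual variable $f=\phi^2$, or with $\psi=J^{1/2}\phi$ as in Marstrander's framework \cite{marstranderCM}. In that formulation the positive kernel $G$ makes replacing $f$ by $|f|$ harmless.
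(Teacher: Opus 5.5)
Your argument is sound, but it departs from the paper at two steps, and neither of the two ``main obstacles'' you anticipate actually arises.

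\textbf{Compactness.} You run Lions' trichotomy and exclude dichotomy using cutoffs $\chi(\cdot/R)$ and an $O(R^{-1})$ commutator bound for the order-$3/2$ multiplier. The paper uses no cutoffs at all. After translating to defeat vanishing (the same interval estimate as your Lions lemma), it takes a weak $H^{3/2}$ limit $v\ne0$. It then uses the exact Hilbert-space splitting $Q_c(v_n)=Q_c(v)+Q_c(v_n-v)+o(1)$, which holds for any bounded quadratic form under weak convergence, so the nonlocality of $A_c$ costs nothing. Combined with the cubic Br\'ezis--Lieb splitting and the inequality $|q|^{2/3}+|1-q|^{2/3}>1$ for $q\ne0,1$, this forces $\int v^3=1$ and strong convergence. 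Your subadditivity step must also allow split cubic masses $\alpha\notin(0,1)$, because $\int\phi^3$ is sign-indefinite; the paper treats $q\le0$ and $q>1$ explicitly. Your route would work, but the commutator bookkeeping is unnecessary.

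\textbf{Positivity.} Drop the $|\phi|$ replacement entirely. It is not needed, and as stated it cannot work, since $|\phi|\notin H^{3/2}$ whenever $\phi$ has a simple zero. Any nonzero real solution of $A_c\phi=\phi^2$ satisfies $\phi=G*\phi^2$ with $G>0$ and $0\le\phi^2\not\equiv0$, hence $\phi>0$ everywhere. This is your own final sentence and exactly the paper's argument. It does not even use the sign of the Lagrange multiplier, since $A_cv=\lambda v^2$ with $\lambda\ne0$ already gives $A_c(\lambda v)=(\lambda v)^2$.

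\textbf{Decay.} Your pointwise Neumann iteration is a valid alternative. The remainder $\varepsilon^NG^{*N}*\phi$ tends to zero uniformly because $\varepsilon\|G\|_1<1$. The summed terms are controlled by $\|e^{\beta|\cdot|}G^{*k}\|_1\le\|e^{\beta|\cdot|}G\|_1^k$ and $\|e^{\beta|\cdot|}\phi^2\mathbf 1_{|x|<R}\|_\infty\le e^{\beta R}\|\phi\|_\infty^2$, so you get the pointwise bound in one step. The paper instead bounds $\|e^{\beta\min\{|x|,R\}}\phi\|_2$ uniformly in $R$ and then uses $\sup e^{\beta|x|}K_c<\infty$. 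Both approaches avoid assuming the weighted quantity is finite in advance.

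\textbf{Traveling wave.} Your computation runs in the converse direction. The identity $-c\phi'+\phi'''+B(\phi^2)=\partial_x[J^{-1}\phi^2-(c-\partial_x^2)\phi]$ that you wrote already gives the required implication from \eqref{eq:profile}, with no decay argument.

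The bootstrap, the kernel factorization, the moving-plane argument and the appeal to Theorem~\ref{thm:smallspeedstability} match the paper.
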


The main stability result is Theorem~\ref{thm:smallspeedstability}.
For every sufficiently small $c>0$, it verifies the conditions
\[
 n_-(L_c)=1,\qquad
 \ker L_c=\operatorname{span}\{\phi_c'\},\qquad
 \frac{d}{dc}\mathcal P(\phi_c)>0,
\]
where $n_-$ denotes the number of negative eigenvalues counted with
multiplicity. Constrained minimization supplies the first condition.
The KdV limit and resolvent perturbation supply the second; uniform
derivative control of the rescaled branch supplies the third. These
verified conditions give coercivity modulo translation for a conserved
penalized action, and hence orbital stability in $H^{3/2}$.
Theorem~\ref{thm:flow} provides the global, unconditionally unique flow
in $H^{1/2}$ and its higher-regularity persistence; Theorem~\ref{thm:waves}
provides the minimizing profiles. Section~\ref{sec:estimates} establishes
the space--time estimates, Section~\ref{sec:cauchy} proves the evolution
theory and conservation laws, Section~\ref{sec:waves} develops the
profiles and their KdV limit, and Section~\ref{sec:stability} proves
orbital stability and verifies its hypotheses at small speed.

\section{Space--time estimates}\label{sec:estimates}
For $b>1/2$, the interaction representation gives
\begin{equation}\label{eq:interaction}
\norm u_{X^{s,b}}=\norm{S(-t)u(t)}_{H^b(\R_t;H^s_x)},\qquad
\sup_t\norm{u(t)}_{H^s}\le C_b\norm u_{X^{s,b}}.
\end{equation}
Indeed, conjugation shifts the temporal Fourier variable by $\xi^3$. Fourier inversion and Cauchy--Schwarz in time use $\int\la\tau^{-2b}\,d\tau<\infty$. Dominated convergence in the inversion formula proves continuity. Density extends the result beyond Schwartz functions.

\begin{lemma}\label{lem:integrals}
For $p>1$,
\[
\int\la r^{-p}\la{A-r}^{-p}\,dr\le C_p\la A^{-p},\qquad
\sup_{A\in\R,\,\sigma=\pm1}\int\la{A+\sigma z^2}^{-p}\,dz<\infty.
\]
\end{lemma}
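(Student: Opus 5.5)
For the first inequality I split the line according to which factor is comparable to $\la A$. If $|A|\le 2$, then $\la A$ is bounded, and the integral is at most $\int\la r^{-p}\,dr<\infty$ because $p>1$; this gives the bound with some constant. For $|A|>2$, I divide the $r$-axis into the two half-lines $\{|r|\ge|A|/2\}$ and $\{|A-r|\ge |A|/2\}$. These cover $\R$, since $|r|+|A-r|\ge|A|$. On the first region $\la r^{-p}\le 2^p\la A^{-p}$, and the remaining factor $\la{A-r}^{-p}$ is integrable in $r$ with integral $\int\la{s}^{-p}\,ds$. The second region is symmetric under $r\mapsto A-r$. Adding the two estimates gives $C_p\la A^{-p}$ with $C_p=2^{p+1}\int\la s^{-p}\,ds$ (up to the small-$|A|$ adjustment).

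For the second bound, the substitution $z\mapsto -z$ and the symmetry $\la{-x}=\la x$ show that the case $\sigma=-1$ with parameter $A$ equals the case $\sigma=+1$ with parameter $-A$. So it suffices to take $\sigma=1$ and bound $I(A)=\int\la{A+z^2}^{-p}\,dz$ uniformly in $A\in\R$. Since the integrand is even in $z$, I substitute $w=z^2$ on $z>0$, so $dz=dw/(2\sqrt w)$. This gives
\[
I(A)=\int_0^\infty \la{A+w}^{-p}\,w^{-1/2}\,dw .
\]
If $A\ge0$, then $\la{A+w}\ge\la w$, so $I(A)\le\int_0^\infty\la w^{-p}w^{-1/2}\,dw<\infty$. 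This converges at $0$ because $w^{-1/2}$ is integrable there, and at infinity because the decay is $w^{-p-1/2}$ with $p>0$.

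The main point is $A<0$, where the integrand is of size one near the critical point $w=|A|$, that is, near $z=\pm\sqrt{|A|}$. I split at $w=|A|/2$. On $w\ge |A|/2$ I use $w^{-1/2}\le \sqrt2\,|A|^{-1/2}$ and $\int\la{A+w}^{-p}\,dw=\int\la s^{-p}\,ds<\infty$, which gives a contribution $O(|A|^{-1/2})$. On $0<w<|A|/2$ I have $|A+w|\ge|A|/2$, so the integrand is at most $2^p\la A^{-p}w^{-1/2}$. Its integral over this range is $O(\la A^{-p}|A|^{1/2})$, which is bounded since $p>1/2$. These bounds blow up only if $|A|\to0$. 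For $|A|\le1$ I avoid that by using the crude bound $\la{A+w}^{-p}\le C\la w^{-p}$, valid because $\la{A+w}\ge\la w/2$ when $|A|\le1$, and then repeat the $A\ge0$ estimate. Combining the cases gives the uniform supremum.

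The only delicate step is the stationary point at $z^2=-A$. It is handled by the change of variables $w=z^2$, which turns the quadratic phase into a linear one and leaves only the integrable Jacobian $w^{-1/2}$, away from $w=0$.
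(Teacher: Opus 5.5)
Your argument is correct. The first inequality is proved as in the paper: you cover by $\{|r|\ge|A|/2\}$ and $\{|A-r|\ge|A|/2\}$ and integrate the remaining weight for bounded $A$. (These sets are complements of intervals rather than half-lines, which changes nothing.)

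For the second bound you use the same substitution $w=z^2$, but you split at $w=|A|/2$, a scale tied to the stationary point $w=-A$. That choice is what forces the reduction to $\sigma=1$, the case distinction on the sign of $A$, and the separate patch for $|A|\le1$, where your bound $O(|A|^{-1/2})$ degenerates.

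The paper instead splits at the fixed point $w=1$. On $(0,1)$ it uses $\langle\cdot\rangle^{-p}\le1$ and $\int_0^1w^{-1/2}\,dw=2$. On $(1,\infty)$ it uses $w^{-1/2}\le1$ together with translation invariance, $\int\langle A+\sigma w\rangle^{-p}\,dw\le\int\langle s\rangle^{-p}\,ds$. This gives the bound $2+\int\langle s\rangle^{-p}\,ds$ for every $A$ and both signs $\sigma$ at once, so the stationary point never needs separate attention.

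Your decomposition does give more: decay of order $|A|^{-1/2}$ as $A\to-\infty$ for $\sigma=1$. However, Proposition~\ref{prop:bilinear} only uses uniformity in $A$, so that extra information is not needed there.
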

\begin{proof}
For $|A|>2$, split the first integral into the two covering sets $|r|\ge|A|/2$ and $|A-r|\ge|A|/2$ and extract the corresponding weight. For bounded $A$, integrate either remaining weight. In the second integral, $y=z^2$ gives $\int_0^\infty y^{-1/2}\la{A+\sigma y}^{-p}\,dy$. The part $y<1$ is at most two, and for $y\ge1$ use $y^{-1/2}\le1$ and translation invariance of the integrable weight.
\end{proof}

\begin{proposition}\label{prop:bilinear}
For $b>1/2$,
\begin{equation}\label{eq:bilinear}
\norm{B(uv)}_{L^2_{t,x}}\le C_b\norm u_{X^{0,b}}\norm v_{X^{0,b}}.
\end{equation}
For every $s\ge0$, moreover,
\begin{equation}\label{eq:tameX}
\norm{B(uv)}_{L^2_tH^s_x}
\le C_{s,b}\bigl(\norm u_{X^{s,b}}\norm v_{X^{0,b}}
+\norm u_{X^{0,b}}\norm v_{X^{s,b}}\bigr).
\end{equation}
\end{proposition}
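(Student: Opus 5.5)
The plan is to prove \eqref{eq:bilinear} by duality on the Fourier side and a Cauchy--Schwarz argument, using Lemma~\ref{lem:integrals} to evaluate the resulting weight integrals. Then \eqref{eq:tameX} follows by distributing the weight $\la\xi^s$ over the two factors.

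\emph{Reduction.} It suffices to treat Schwartz $u,v$ and conclude by density. Set
\[
f(\xi,\tau)=\la{\tau-\xi^3}^b|\widetilde u(\xi,\tau)|,\qquad
g(\xi,\tau)=\la{\tau-\xi^3}^b|\widetilde v(\xi,\tau)|,
\]
so that $\norm f_2=\norm u_{X^{0,b}}$ and $\norm g_2=\norm v_{X^{0,b}}$. Up to the convolution constant, $|\widetilde{B(uv)}(\xi,\tau)|$ is bounded by
\[
|m(\xi)|\int\!\!\int \frac{f(\xi_1,\tau_1)\,g(\xi-\xi_1,\tau-\tau_1)}{\la{\tau_1-\xi_1^3}^b\la{\tau-\tau_1-(\xi-\xi_1)^3}^b}\,d\xi_1\,d\tau_1 .
\]
Apply Cauchy--Schwarz in $(\xi_1,\tau_1)$ and then integrate in $(\xi,\tau)$; the numerator contributes $\norm f_2^2\norm g_2^2$ by Fubini. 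Hence it suffices to show
\[
\sup_{\xi,\tau}\; |m(\xi)|^2\int\!\!\int \la{\tau_1-\xi_1^3}^{-2b}\la{\tau-\tau_1-(\xi-\xi_1)^3}^{-2b}\,d\tau_1\,d\xi_1<\infty .
\]

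\emph{The weight integral.} Since $2b>1$, the first part of Lemma~\ref{lem:integrals} performs the $\tau_1$ integral. It gives the bound $C\la{\tau-\xi_1^3-(\xi-\xi_1)^3}^{-2b}$. Complete the square using $\xi_1(\xi-\xi_1)=\xi^2/4-z^2$ with $z=\xi_1-\xi/2$. This yields
\[
\tau-\xi_1^3-(\xi-\xi_1)^3=\tau-\tfrac{\xi^3}{4}-3\xi z^2 .
\]
For $\xi\neq0$, substitute $w=\sqrt{3|\xi|}\,z$. The second part of Lemma~\ref{lem:integrals}, with $A=\tau-\xi^3/4$ and $\sigma=-\operatorname{sgn}\xi$, then bounds the $\xi_1$ integral by $C|\xi|^{-1/2}$ uniformly in $\tau$. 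Multiplying by $|m(\xi)|^2=\xi^2/(1+\xi^2)\le\min(\xi^2,1)$ gives the product $\min(\xi^2,1)|\xi|^{-1/2}$, which is bounded. This proves \eqref{eq:bilinear}.

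\emph{The main obstacle} is the degenerating Jacobian $|\xi|^{-1/2}$ near output frequency $\xi=0$, where the resonance function $3\xi\xi_1(\xi-\xi_1)$ stops dispersing. This is exactly where the zero of $m$ at the origin must be used; the bounded high-frequency size $|m|\to1$ handles large $|\xi|$. This is the space--time gain announced in the introduction, and it replaces the missing $L^2\to L^2$ bound.

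\emph{Tame estimate.} For $s\ge0$ use $\la\xi^s\le C_s(\la{\xi_1}^s+\la{\xi-\xi_1}^s)$ inside the convolution. This splits $\la\xi^s|\widetilde{B(uv)}|$ into two terms of the same form as above. In the first term $f$ is replaced by $\la{\xi_1}^sf$, whose $L^2$ norm is $\norm u_{X^{s,b}}$; in the second term $g$ is replaced analogously. Applying the identical Cauchy--Schwarz and weight bound to each term gives \eqref{eq:tameX}.
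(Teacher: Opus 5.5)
Your proposal is correct and follows the paper's proof essentially step by step. The shared steps are: Cauchy--Schwarz in the convolution variables; the first bound of Lemma~\ref{lem:integrals} for the $\tau_1$ integral; completing the square via $\xi_1^3+(\xi-\xi_1)^3=\xi^3/4+3\xi(\xi_1-\xi/2)^2$; and the second bound of that lemma, whose Jacobian factor $|\xi|^{-1/2}$ is absorbed by $|m(\xi)|^2=\xi^2/(1+\xi^2)$. For \eqref{eq:tameX} both proofs split the weight $\langle\xi\rangle^s\le C_s(\langle\xi_1\rangle^s+\langle\xi-\xi_1\rangle^s)$.

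The paper spells out two points you leave implicit. First, your supremum must be taken over $\xi\neq0$: at $\xi=0$ the weight integral is infinite, but that set has measure zero. Second, the paper checks that the density extension agrees with the actual product $B(uv)$ for general $X^{0,b}$ inputs.
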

\begin{proof}
Let $F=\la{\tau-\xi^3}^b|\widetilde u|$ and $G=\la{\tau-\xi^3}^b|\widetilde v|$. Cauchy--Schwarz in convolution yields
\[
|\widetilde{B(uv)}(\xi,\tau)|^2\le C K_b(\xi,\tau)(F^2*G^2)(\xi,\tau),
\]
where, away from $\xi=0$,
\[
K_b=\frac{\xi^2}{1+\xi^2}\iint
\frac{d\xi_1\,d\tau_1}{\la{\tau_1-\xi_1^3}^{2b}
\la{\tau-\tau_1-(\xi-\xi_1)^3}^{2b}}.
\]
For fixed $\xi_1$, set $r=\tau_1-\xi_1^3$ and
$A=\tau-\xi_1^3-(\xi-\xi_1)^3$. The first bound in
Lemma~\ref{lem:integrals}, with $p=2b>1$, gives
\[
\int\frac{d\tau_1}{\la{\tau_1-\xi_1^3}^{2b}
\la{\tau-\tau_1-(\xi-\xi_1)^3}^{2b}}
\le C_{2b}\la{\tau-\xi_1^3-(\xi-\xi_1)^3}^{-2b}.
\]
The algebraic identity
\[
\xi_1^3+(\xi-\xi_1)^3=\frac{\xi^3}{4}
+3\xi(\xi_1-\xi/2)^2
\]
therefore implies, for $\xi\ne0$,
\begin{align*}
K_b(\xi,\tau)
&\le C_{2b}\frac{\xi^2}{1+\xi^2}
\int\la{\tau-\xi^3/4-3\xi(\xi_1-\xi/2)^2}^{-2b}\,d\xi_1\\
&=\frac{C_{2b}}{\sqrt3}\frac{|\xi|^{3/2}}{1+\xi^2}
\int\la{\tau-\xi^3/4-\operatorname{sgn}(\xi)z^2}^{-2b}\,dz
\le D_b\frac{|\xi|^{3/2}}{1+\xi^2}\le D_b.
\end{align*}
Here $z=\sqrt{3|\xi|}(\xi_1-\xi/2)$, including its Jacobian,
and $D_b$ is the product of the two constants in
Lemma~\ref{lem:integrals} divided by $\sqrt3$.
The last rational factor is at most one: use $|\xi|^{3/2}\le1$
for $|\xi|\le1$ and $|\xi|^{3/2}\le\xi^2$ otherwise.
At $\xi=0$ the double integral without its prefactor is infinite:
after integrating in $\tau_1$ its integrand is a positive constant
in $\xi_1$, because $\xi_1^3+(-\xi_1)^3=0$.
Thus the displayed formula for $K_b$ is used only for $\xi\ne0$;
we separately assign $K_b(0,\tau)=0$, avoiding a product $0\cdot\infty$.
For Schwartz inputs the multiplier itself gives
$\widetilde{B(uv)}(0,\tau)=0$. This assignment is measurable and
changes nothing in the integrated bound, since $\{\xi=0\}$ has
two-dimensional Lebesgue measure zero.

All integrands defining $K_b$ are nonnegative; Tonelli permits integration
in $\tau_1$ first even before finiteness has been established.
A second application, now to $F^2*G^2$, gives exactly
\[
\iint(F^2*G^2)(\xi,\tau)\,d\xi\,d\tau
=\norm F_2^2\norm G_2^2.
\]
Thus \eqref{eq:bilinear} holds with $C_b=C_{\rm FT}\sqrt{D_b}$,
where $C_{\rm FT}$ is the fixed convolution normalization factor.
For $s\ge0$, choose a constant $W_s$ such that
\[
\la{\eta+\zeta}^s\le W_s(\la\eta^s+\la\zeta^s);
\quad W_s=2^s\text{ is an admissible choice}.
\]
Insert this inequality in the absolute convolution, apply the preceding
proof separately with $F_s=\la\xi^sF$ and with $G_s=\la\xi^sG$,
and use the triangle inequality. This proves \eqref{eq:tameX} with
$C_{s,b}=W_s C_b$. In particular the constants depend only on $s,b$
and the Fourier convention, not on the inputs or the time interval.

Schwartz functions are dense in $X^{s,b}$: approximate the weighted
Fourier transform in $L^2$ by smooth compactly supported functions and
undo the smooth positive weights. The difference identity
$B(u_nv_n)-B(u_mv_m)=B((u_n-u_m)v_n)+B(u_m(v_n-v_m))$
shows that the images are Cauchy in $L^2_tH^s_x$, independently of the
approximating sequences. On every compact time interval $I$,
\eqref{eq:interaction} also gives
\[
\sup_{t\in I}\norm{u_n(t)v_n(t)-u(t)v(t)}_{L^1_x}
\le \sup_I\norm{u_n-u}_2\sup_I\norm{v_n}_2
+\sup_I\norm u_2\sup_I\norm{v_n-v}_2\longrightarrow0.
\]
Consequently $u_nv_n\to uv$ in distributions.
The symbol $i\xi\la\xi^{-1}$ is smooth and all its derivatives
are bounded. The Leibniz rule for Schwartz seminorms therefore shows
that its Fourier multiplier $B$ preserves $\mathcal S(\R)$ continuously.
It extends by duality to a continuous map
$B:\mathcal S'(\R)\to\mathcal S'(\R)$. Testing on
compact time intervals therefore identifies the $L^2$ limit with $B(uv)$.
This proves both the extension and its compatibility with the actual product.
\end{proof}

Choose $\chi\in C_c^\infty$, equal to one on $[-1,1]$ and supported in $[-2,2]$, and put $\chi_T(t)=\chi(t/T)$.
\begin{lemma}\label{lem:duhamel}
For $0<T\le1$, $0<b<1$ and $F\in L^2_tH^s_x$,
\begin{equation}\label{eq:duhamelbound}
\norm{\chi_T(t)\int_0^tS(t-r)F(r)\,dr}_{X^{s,b}}
\le \mathcal D_bT^{1-b}\norm F_{L^2_tH^s_x}.
\end{equation}
Here $\mathcal D_b$ depends only on $b$ and the fixed cutoff $\chi$;
it is independent of $s$ because the proof applies in the Hilbert space $H^s_x$.
Also $\norm{\chi(t)S(t)h}_{X^{s,b}}=L_b\norm h_{H^s}$,
where $L_b=\norm\chi_{H^b}$.
\end{lemma}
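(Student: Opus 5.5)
The free term is immediate. Since $\widetilde{\chi S(\cdot)h}(\xi,\tau)=\widehat\chi(\tau-\xi^3)\widehat h(\xi)$, we have
\[
\norm{\chi(t)S(t)h}_{X^{s,b}}^2=\int\la\xi^{2s}|\widehat h(\xi)|^2\int\la{\tau-\xi^3}^{2b}|\widehat\chi(\tau-\xi^3)|^2\,d\tau\,d\xi .
\]
The inner integral equals $\norm\chi_{H^b}^2$ by translation invariance in $\tau$, which gives the identity with $L_b=\norm\chi_{H^b}$.

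For the Duhamel bound I will use the interaction representation \eqref{eq:interaction}. Put $G(r)=S(-r)F(r)$. Then $\norm G_{L^2_tH^s_x}=\norm F_{L^2_tH^s_x}$, and
\[
S(-t)\,\chi_T(t)\int_0^tS(t-r)F(r)\,dr=\chi_T(t)\int_0^tG(r)\,dr .
\]
The claim therefore reduces to a scalar-in-time estimate valued in the Hilbert space $H=H^s_x$:
\[
\norm{\chi_T(t)\textstyle\int_0^tG(r)\,dr}_{H^b(\R_t;H)}\le\mathcal D_bT^{1-b}\norm G_{L^2(\R;H)}.
\]
Because the norms are Hilbertian, it suffices to prove this for scalar $G$ with a constant independent of the target space. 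One can also argue frequency by frequency in $\xi$ using Plancherel in $x$; this is why $\mathcal D_b$ does not depend on $s$. Only the values of $G$ on $[-2T,2T]$ enter, so we may replace $G$ by $\mathbf 1_{[-2T,2T]}G$.

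For the scalar estimate I will interpolate between $b=0$ and $b=1$, which is valid since $0<b<1$.
\begin{itemize}
\item At $b=0$, Cauchy--Schwarz gives $|\int_0^tG|\le|t|^{1/2}\norm G_2$. Together with the support of $\chi_T$ this yields $\norm{\chi_T\int_0^tG}_{L^2}\le CT\norm G_2$.
\item At $b=1$, the time derivative is $\chi_T'(t)\int_0^tG+\chi_T(t)G(t)$. The first term is bounded by $T^{-1}\cdot CT\norm G_2$ by the same argument, and the second by $\norm G_2$. Hence the $H^1$ norm is at most $C(1+T)\norm G_2\le C\norm G_2$ for $T\le1$.
\end{itemize}
The map $G\mapsto\chi_T\int_0^tG$ is linear, so complex interpolation of $H^b=[L^2,H^1]_b$ gives the bound $CT^{1-b}\norm G_2$. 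The constant depends only on $\chi$ and $b$.

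The main obstacle is being careful that interpolation is legitimate for the operator with the truncation $\mathbf 1_{[-2T,2T]}$ and the $T$-dependent constants; this is routine. If one prefers to avoid interpolation, the same conclusion can be reached by the standard Fourier-side argument, splitting $\widehat G(\tau)\,(e^{it\tau}-1)/(i\tau)$ into the regions $|\tau|T\le1$ (Taylor expansion of the exponential) and $|\tau|T>1$. That route only requires the $T^{1-b}$ scaling of $\norm{\chi_T}_{H^b}$ together with the $T^{1/2}$ gain from Cauchy--Schwarz.
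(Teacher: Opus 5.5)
Your proposal is correct and follows essentially the same route as the paper: conjugate by $S(-t)$ via \eqref{eq:interaction}, bound $\chi_T(t)\int_0^tS(-r)F(r)\,dr$ by $CT\|F\|_{L^2_tH^s_x}$ in $L^2_tH^s_x$ and its time derivative by $C\|F\|_{L^2_tH^s_x}$ (Cauchy--Schwarz on the cutoff support), interpolate to get $T^{1-b}$, and read off the free identity from $S(-t)\chi(t)S(t)h=\chi(t)h$; your direct Fourier computation of the free term amounts to the same thing. The interpolation step you flag as the main obstacle needs no operator interpolation at all, since $\|h\|_{H^b}\le\|h\|_{L^2}^{1-b}\|h\|_{H^1}^{b}$ follows from H\"older's inequality on the Fourier side applied to the single function $h$.
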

\begin{proof}
Set $q(r)=S(-r)F(r)$ and $h(t)=\chi_T(t)\int_0^tq(r)\,dr$. On the cutoff support Cauchy--Schwarz gives $\norm h_{L^2_tH^s}\le CT\norm F_{L^2_tH^s}$. Its derivative is
$T^{-1}\chi'(t/T)\int_0^tq+\chi_Tq$, whose $L^2_tH^s$ norm is at most $C\norm F_{L^2_tH^s}$. Interpolation yields $\norm h_{H^b_tH^s}\le CT^{1-b}\norm F_{L^2_tH^s}$. Apply \eqref{eq:interaction}. The homogeneous identity follows from $S(-t)\chi(t)S(t)h=\chi(t)h$.
\end{proof}

\section{Local and global evolution}\label{sec:cauchy}
For $I=[t_0,t_0+T]$, define $X^{s,b}(I)$ as the quotient space with norm
\[
\norm u_{X^{s,b}(I)}=\inf\{\norm U_{X^{s,b}}:U|_I=u\}.
\]
The null-restriction subspace is closed in the distribution topology, so the quotient is complete. Formula~\eqref{eq:interaction} supplies the continuous trace and the endpoint values. By almost optimal extensions, the bilinear bounds remain true on $I$ with the same constants.

\subsection{Fixed point, uniqueness and persistence}
First take $I=[0,T]$ and $Y=X^{0,b}(I)$. For $u\in Y$, let $F_u$ be the zero extension of $B(u^2)$ from $I$. Define the map on the quotient by restricting
\[
\chi(t)S(t)u_0-\chi_T(t)\int_0^tS(t-r)F_u(r)\,dr.
\]
The nonlinear term depends only on $u|_I$; no choice of extension enters this definition. On $I$, both cutoffs equal one. To keep the constants of the two estimates distinct, put
$E_b=\mathcal D_b C_b$, where $C_b$ is the constant in
\eqref{eq:bilinear}, and $E_{s,b}=\mathcal D_b C_{s,b}$,
where $C_{s,b}$ is the constant in \eqref{eq:tameX}.
Proposition~\ref{prop:bilinear} and Lemma~\ref{lem:duhamel} give
\begin{align*}
\norm{\Phi(u)}_Y&\le L_b\norm{u_0}_2+E_bT^{1-b}\norm u_Y^2,\\
\norm{\Phi(u)-\Phi(v)}_Y&\le E_bT^{1-b}(\norm u_Y+\norm v_Y)\norm{u-v}_Y.
\end{align*}
On the ball of radius $2L_b(1+\norm{u_0}_2)$, choose $T$ so the nonlinear factor is at most $1/4$. Banach's theorem yields the solution and locally Lipschitz dependence. Restarting the construction defines a maximal interval. If its finite endpoint is approached while the $L^2$ trace remains bounded, a common local time extends the solution.

Uniqueness is not restricted to the construction ball. On a shorter interval of length $h$, two solutions with the same trace satisfy the difference estimate with factor $E_bh^{1-b}$ times their restriction norms. These norms are at most their norms on the original interval. Choose $h$ small and repeat over finitely many intervals. This proves uniqueness throughout $X^{0,b}(I)$.

For $s>0$, fix a low-norm ball of radius $R_0$ on an interval
of length $h$ and run Picard iteration $u_{n+1}=\Phi(u_n)$ starting
with the homogeneous solution. Choose $h$ so that the low-norm
contraction factor $\rho$ is at most $1/2$ and
$\alpha=2E_{s,b}h^{1-b}R_0\le1/4$.
Every iterate has low norm at most $R_0$. Writing $Y_s=X^{s,b}(I)$,
\eqref{eq:tameX} gives
\[
\norm{u_{n+1}}_{Y_s}\le L_b\norm{u_0}_{H^s}
+\alpha\norm{u_n}_{Y_s},\qquad
\sup_n\norm{u_n}_{Y_s}\le M_s:=2L_b\norm{u_0}_{H^s}.
\]
For $d_n=\norm{u_n-u_{n-1}}_{Y_s}$ and
$\delta_n=\norm{u_n-u_{n-1}}_{Y_0}$, factorization and the tame bound yield
\[
d_{n+1}\le\alpha d_n+
2E_{s,b}h^{1-b}M_s\delta_n,
\qquad \delta_n\le\rho^{n-1}\delta_1.
\]
Iteration gives
$d_{n+1}\le\alpha^n d_1+
2E_{s,b}h^{1-b}M_s\delta_1\sum_{j=1}^n\alpha^{n-j}\rho^{j-1}$.
Summing over $n\ge0$ gives a finite sum because
$\alpha,\rho<1$. Thus the iterates converge strongly in $Y_s$,
and their limit is the already constructed low-regularity solution.

To justify the restriction norm on a compact interval $K$ of the
lifespan, choose finitely many open intervals $I_j$ on which the local
solution has extensions $U_j\in X^{0,b}$, with the interiors covering
$K$. Such intervals are available by restarting the local construction
at each trace. Choose smooth compactly supported temporal functions
$\theta_j$, with $\operatorname{supp}\theta_j\subset I_j$ and
$\sum_j\theta_j=1$ on a neighborhood of $K$, and set
\[
 U=\sum_j\theta_jU_j.
\]
The local solutions agree on overlaps by restriction-space uniqueness,
so $U|_K=u$. Multiplication by any $\theta\in C_c^\infty(\mathbb R)$
is bounded on $X^{0,b}$: the interaction representation gives
$S(-t)(\theta U)=\theta S(-t)U$, and the temporal Fourier convolution,
the inequality
$\langle\tau\rangle^b\le C_b\langle\tau-\sigma\rangle^b
\langle\sigma\rangle^b$, and Young's inequality yield
\[
 \|\theta U\|_{X^{0,b}}
 \le C_b\|\langle\sigma\rangle^b\widehat\theta(\sigma)\|_{L^1_\sigma}
       \|U\|_{X^{0,b}}.
\]
The $L^1$ factor is finite since $\widehat\theta$ is Schwartz.
Consequently $U\in X^{0,b}$, and the restriction norms on all
subintervals of $K$ are bounded by $\|U\|_{X^{0,b}}$.
The same argument works in $X^{s,b}$ wherever local higher regularity
has been established. A finite subdivision
with a common $h$ then propagates $H^s$ from one endpoint to the next;
the bound on each new high norm is finite, although it can grow with
the number of subdivisions. No uniform-in-time high-norm bound is
being assumed. The same difference estimate for two solutions has an
additional homogeneous term $L_b\norm{u_0-v_0}_{H^s}$.
Absorb its $\alpha$ term and use low-norm Lipschitz dependence to obtain
high-norm Lipschitz dependence on bounded sets. This also proves
persistence throughout the lower-regularity lifespan.
Uniqueness in $X^{s,b}$ follows from the $X^{0,b}$ uniqueness already
proved, since $X^{s,b}(I)\subset X^{0,b}(I)$.
Time reversal together with $x\mapsto-x$ gives the negative-time result.

We now prove unconditional uniqueness for $s\ge1/4$. The endpoint
Sobolev embedding $H^{1/4}(\mathbb R)\hookrightarrow L^4(\mathbb R)$
and $|m|\le1$ give
\[
 \|B(u^2-v^2)\|_2
 \le(\|u\|_4+\|v\|_4)\|u-v\|_4
 \le C_s(\|u\|_{H^s}+\|v\|_{H^s})\|u-v\|_{H^s}.
\]
Thus any mild solution $u\in C(I;H^s)$ has
$B(u^2)\in C(I;L^2)$. Its mild equation is meaningful as an $L^2$
Bochner integral, even when it is initially formulated in distributions.
On a subinterval $I_0=[t_0,t_0+h]\subset I$ with $0<h\le1$, extend
this force by zero outside $I_0$. After translating time, the cutoff
homogeneous solution belongs to $X^{0,b}$, and Lemma~\ref{lem:duhamel}
places the cutoff Duhamel term in $X^{0,b}$. Their difference agrees
with $u$ on $I_0$, so $u\in X^{0,b}(I_0)$.
Every $C(I;H^s)$ mild solution therefore belongs locally to the
already established uniqueness class. Applying that uniqueness on
successive subintervals proves agreement throughout $I$, in both time
directions, for solutions with the same trace. This includes the global
energy endpoint $s=1/2$ and proves the strengthened uniqueness assertion
in Theorem~\ref{thm:flow}.

For $s>1/2$, the spatial estimate
\begin{equation}\label{eq:spatial}
\norm{B(uv)}_{H^s}\le C_s\norm u_{H^s}\norm v_{H^s}
\end{equation}
follows from $|m(\xi)|\le1$, the inequality
$\langle\xi\rangle^s\le C_s(\langle\eta\rangle^s+
\langle\xi-\eta\rangle^s)$ and Young's convolution inequality:
\[
 \|B(uv)\|_{H^s}\le C_s\big(
 \|\langle\xi\rangle^s\widehat u\|_2\|\widehat v\|_1
 +\|\widehat u\|_1\|\langle\xi\rangle^s\widehat v\|_2\big).
\]
Here Cauchy--Schwarz gives
$\|\widehat u\|_1\le
(\int\langle\xi\rangle^{-2s}d\xi)^{1/2}\|u\|_{H^s}$,
and the integral is finite precisely when $s>1/2$.
Thus $u\mapsto B(u^2)$ is locally Lipschitz on $H^s$, with
$\|B(u^2-v^2)\|_{H^s}\le
C_s(\|u\|_{H^s}+\|v\|_{H^s})\|u-v\|_{H^s}$.
The Duhamel map is consequently a contraction on a ball of
$C(I;H^s)$ when $2C_s|I|R<1$.
The restriction-space solution satisfies this same mild equation,
since Proposition~\ref{prop:bilinear} identifies the quadratic force
with the actual product. The unconditional uniqueness proved above
identifies the two constructions. Restarting the spatial contraction
also gives the $H^s$ continuation criterion for $s>1/2$.
These observations finish the local part of Theorem~\ref{thm:flow}.

\subsection{Conservation and the energy endpoint}
For a smooth real solution, $JB=\partial_x$ and skew-adjointness imply
\[
\frac d{dt}\mathcal P(u)
=-\la{Ju,u_{xxx}}-\la{u,\partial_x(u^2)}
=-\frac23\int\partial_x(u^3)=0.
\]
For this computation it suffices to take real $H^5$ solutions:
$u_t\in H^2$, so differentiation of $\mathcal P$ is legitimate.
The linear term has zero real part by its odd imaginary Fourier symbol.
Moreover $u^3\in L^1$ since $u\in L^\infty\cap L^2$, and
\[
\norm{(u^3)_x}_1\le3\norm u_\infty\norm u_2\norm{u_x}_2<\infty.
\]
Thus $u^3\in W^{1,1}(\R)$; its absolutely continuous representative
has limits at both infinities, and integrability forces these limits
to be zero. Its derivative has integral zero. This argument is applied
only to the smooth approximants, not to the $H^{1/2}$ solution itself.

Choose real $u_{0,n}\in H^5$ with $u_{0,n}\to u_0$ in $H^{1/2}$,
for example smooth even Fourier cutoffs of the real datum.
Their $H^{1/2}$ norms have a common bound. The contraction at
$s=1/2$ gives one interval $[-h,h]$ and solutions satisfying
\[
\sup_{|t|\le h}\norm{u_n(t)-u(t)}_{H^{1/2}}
\le C\norm{u_{0,n}-u_0}_{H^{1/2}}\longrightarrow0.
\]
Persistence, proved using the low-norm bound above, puts each $u_n$
in $C([-h,h];H^5)$ even if its initial $H^5$ norm grows with $n$.
The preceding calculation gives $\mathcal P(u_n(t))=\mathcal P(u_{0,n})$.
Since
\[
|\mathcal P(v)-\mathcal P(w)|
\le\tfrac12(\norm v_{H^{1/2}}+\norm w_{H^{1/2}})
\norm{v-w}_{H^{1/2}},
\]
pass to the limit uniformly in $t$ to obtain conservation on this interval.
At every subsequent trace the same local argument applies.
A finite chain covers any compact interval of existence and proves
\eqref{eq:Pconservation} there without assuming global smooth solutions.

Its conserved value bounds the $L^2$ norm. The $L^2$ continuation argument therefore extends the solution for all time; persistence and the local $H^{1/2}$ construction retain the endpoint regularity. Higher $H^s$ norms remain finite on each compact interval by the tame argument, using the common lower-regularity bound. Iterating local dependence over a finite covering proves locally Lipschitz flow on each fixed compact time interval. This finishes Theorem~\ref{thm:flow}.

\subsection{Hamiltonian and a uniform higher-energy bound}
\begin{proposition}\label{prop:H}
For real $H^{3/2}$ solutions, the Hamiltonian
\begin{equation}\label{eq:H}
\mathcal H(u)=\frac12\la{Ju_x,u_x}-\frac13\int u^3
\end{equation}
is conserved. Consequently
\begin{equation}\label{eq:uniformhigher}
\sup_{t\in\R}\norm{u(t)}_{H^{3/2}}^2
\le\norm{u_0}_{H^{1/2}}^2+2|\mathcal H(u_0)|
+C\norm{u_0}_{H^{1/2}}^3.
\end{equation}
\end{proposition}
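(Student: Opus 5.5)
The plan is to derive \eqref{eq:uniformhigher} as an exact identity plus one Sobolev bound, once conservation of $\mathcal H$ is known. For real $u\in H^{3/2}$ we have $\la\xi\cdot\xi^2+\la\xi=\la\xi^3$, hence
\[
\norm u_{H^{3/2}}^2=\la{Ju,u}+\la{Ju_x,u_x}
=2\mathcal P(u)+2\mathcal H(u)+\tfrac23\int u^3 .
\]
Evaluating at time $t$ and using $\mathcal P(u(t))=\mathcal P(u_0)$ from \eqref{eq:Pconservation} together with $\mathcal H(u(t))=\mathcal H(u_0)$ gives
\[
\norm{u(t)}_{H^{3/2}}^2=\norm{u_0}_{H^{1/2}}^2+2\mathcal H(u_0)+\tfrac23\int u(t)^3 .
\]
The cubic term is bounded through $H^{1/6}(\R)\hookrightarrow L^3(\R)$:
\[
\Big|\int u(t)^3\Big|\le\norm{u(t)}_3^3\le C\norm{u(t)}_{H^{1/2}}^3=C\norm{u_0}_{H^{1/2}}^3,
\]
again by conservation of $\mathcal P$. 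This yields \eqref{eq:uniformhigher} exactly as stated: coefficient one on $\norm{u_0}_{H^{1/2}}^2$, coefficient two on $|\mathcal H(u_0)|$, and a cube of the $H^{1/2}$ norm. No absorption into the left side is needed, because the cubic term is controlled by the conserved lower norm alone. Since the solution is global by Theorem~\ref{thm:flow}, the supremum is over all $t\in\R$.

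Conservation of $\mathcal H$ will be proved first for real solutions in $C([-h,h];H^5)$. The variational derivative is $\mathcal H'(u)=-Ju_{xx}-u^2$. Using $BJ=\partial_x$, one gets $B\mathcal H'(u)=-u_{xxx}-B(u^2)=u_t$, so the equation is $u_t=B\mathcal H'(u)$. For $u\in H^5$ we have $\mathcal H'(u)\in H^3$ and $u_t\in H^2$, so differentiating $\mathcal H(u(t))$ is legitimate and
\[
\frac{d}{dt}\mathcal H(u)=\la{\mathcal H'(u),B\mathcal H'(u)}=0 .
\]
The pairing vanishes because $B$ has an odd imaginary symbol and is therefore skew-adjoint on real functions. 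The identity $\la{Ju_x,u_{xt}}=\la{-Ju_{xx},u_t}$ is an integration by parts at $H^5$ regularity.

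The main step is passing to $H^{3/2}$ data, and I would follow the approximation scheme used for $\mathcal P$. Take real $u_{0,n}\in H^5$ with $u_{0,n}\to u_0$ in $H^{3/2}$, for instance even Fourier cutoffs. Theorem~\ref{thm:flow} at $s=3/2$ provides a common interval $[-h,h]$ and Lipschitz convergence $u_n\to u$ in $C([-h,h];H^{3/2})$. Persistence of regularity keeps each $u_n$ in $C([-h,h];H^5)$, so $\mathcal H(u_n(t))=\mathcal H(u_{0,n})$. It remains to check that $\mathcal H$ is locally Lipschitz on $H^{3/2}$. The quadratic part is controlled by the $H^{3/2}$ norm. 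The cubic part satisfies
\[
\Big|\int v^3-w^3\Big|\le C(\norm v_3^2+\norm w_3^2)\norm{v-w}_3,
\]
and the $L^3$ norms are bounded by the $H^{1/6}$, hence $H^{3/2}$, norms. Passing to the limit uniformly in $t$ gives conservation on $[-h,h]$. Restarting at successive traces covers every compact interval, and the global flow then gives conservation for all $t$. With this established, the identity in the first paragraph completes the proof.
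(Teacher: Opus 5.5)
Your proposal is correct and follows the same route as the paper. You prove conservation of $\mathcal H$ for $H^5$ solutions via $u_t=B\mathcal H'(u)$ and skew-adjointness of $B$, pass to $C_tH^{3/2}$ by smooth approximation and local Lipschitz continuity of $\mathcal H$, and then combine $\|u\|_{H^{3/2}}^2=\|u\|_{H^{1/2}}^2+\langle Ju_x,u_x\rangle$ with $H^{1/2}\hookrightarrow L^3$ and conservation of $\mathcal P$. One harmless slip: for $u\in H^5$, $\mathcal H'(u)=-Ju_{xx}-u^2$ lies in $H^2$, not $H^3$, which is still ample for the pairing $\langle\mathcal H'(u),B\mathcal H'(u)\rangle$.
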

\begin{proof}
The functional is $C^1$ on the real space $H^{3/2}$, with derivative $\mathcal H'(u)=-Ju_{xx}-u^2$ in its dual. For smooth solutions $u_t=B\mathcal H'(u)$, so skew-adjointness gives $d\mathcal H/dt=0$. Smooth approximation in $C_tH^{3/2}$ passes the conservation law to the stated class. The inclusion $H^{1/2}\hookrightarrow L^3$ and \eqref{eq:Pconservation} bound the cubic integral by $C\norm{u_0}_{H^{1/2}}^3$. Finally
$\norm u_{H^{3/2}}^2=\norm u_{H^{1/2}}^2+\norm{J^{1/2}u_x}_2^2$, which proves \eqref{eq:uniformhigher}.
\end{proof}
The $L^2$ norm alone is not asserted to be conserved. Conservation of $\int u$ on the line requires additional integrability hypotheses and is not used for global continuation.

\section{Solitary waves by constrained minimization}\label{sec:waves}
For a traveling wave $\phi(x-ct)$, the integrated profile equation is
$-c\phi+\phi''+J^{-1}(\phi^2)=0$. We construct \eqref{eq:profile} first and verify localization afterwards. Its symbol satisfies
\[
\min\{c,1\}\langle\xi\rangle^3
\le a_c(\xi)=(c+\xi^2)\langle\xi\rangle
\le\max\{c,1\}\langle\xi\rangle^3,
\qquad \langle\xi\rangle=\sqrt{1+\xi^2}.
\]
Thus the profile energy space is $H^{3/2}(\R)$.

\subsection{The minimum and nonvanishing}
For real functions, define
\[
Q_c(v)=\int a_c|\widehat v|^2,
\qquad I_c=\inf\{Q_c(v):v\in H^{3/2},\ \int v^3=1\}.
\]
A normalized nonnegative test function shows $I_c<\infty$, while Sobolev embedding gives $1\le C_cQ_c(v)^{3/2}$ on the constraint, hence $I_c>0$. By homogeneity and a change of sign when needed,
\begin{equation}\label{eq:homogeneity}
Q_c(v)\ge I_c\left|\int v^3\right|^{2/3}
\end{equation}
for every real $v$.

Let $v_n$ be minimizing. It is bounded in $H^{3/2}$. On $I_j=(j,j+1)$ the local $H^1$ embedding has a uniform constant and gives
\[
\int_{I_j}|v|^3\le C\norm v_{H^1(I_j)}\norm v_{L^2(I_j)}^2.
\]
Sum and use Cauchy--Schwarz to obtain
\begin{equation}\label{eq:nonvanishing}
\norm v_3^3\le C\sup_j\norm v_{L^2(I_j)}\norm v_{H^1}\norm v_2.
\end{equation}
Let $K_1=\sup_n\norm{v_n}_{H^1}$ and $K_0=\sup_n\norm{v_n}_2$;
these finite constants are positive. Since $\norm{v_n}_3^3\ge1$,
\eqref{eq:nonvanishing} gives
\[
\sup_{j\in\mathbb Z}\norm{v_n}_{L^2(I_j)}
\ge\gamma:=(CK_1K_0)^{-1}>0.
\]
Choose $j_n$ with $\norm{v_n}_{L^2(I_{j_n})}\ge\gamma/2$;
this does not require the supremum to be attained.
Replace $v_n(x)$ by $v_n(x+j_n+1/2)$, preserving its energy and constraint.
The fixed interval $(-1/2,1/2)$ then carries squared $L^2$ mass
at least $\gamma^2/4$. After extraction,
\[
v_n\rightharpoonup v\text{ in }H^{3/2},\qquad
v_n\to v\text{ in }L^2_{\rm loc}\cap L^3_{\rm loc},\qquad v\ne0.
\]
Here is the local compactness argument in detail. For a fixed bounded
interval $K$, choose $\eta\in C_c^\infty$ equal to one on $K$.
The sequence $w_n=\eta v_n$ is bounded in $H^1$, since
$(\eta v_n)'=\eta'v_n+\eta v_n'$, and has a common compact support.
For every translation $y$ the fundamental theorem of calculus and
Minkowski's inequality give
\[
\norm{w_n(\cdot+y)-w_n}_2
\le |y|\norm{w_n'}_2\le C_\eta|y|.
\]
Thus the sequence is bounded, tight and uniformly continuous under
translations in $L^2$. Fréchet--Kolmogorov makes it precompact there.
A diagonal extraction over $K=[-j,j]$, $j\in\mathbb N$, yields local
strong $L^2$ convergence; the limit is the weak $H^{3/2}$ limit because
testing against smooth compactly supported functions identifies both.
The global $H^{3/2}$ bound gives a uniform $L^\infty$ bound for $v_n$
and for $v$. Hence
\[
\norm{v_n-v}_{L^3(K)}^3
\le \norm{v_n-v}_{L^\infty(K)}\norm{v_n-v}_{L^2(K)}^2\longrightarrow0.
\]
The fixed interval carrying positive $L^2$ mass retains it under strong
local convergence, so $v\ne0$. Weak convergence, denoted by
$\rightharpoonup$, is used at this stage; strong global $H^{3/2}$
convergence will be proved only after the splitting argument.

\subsection{Splitting and strong convergence}
The signed cubic splitting is proved directly below; see \cite{brezislieb} for the general functional-splitting principle.
Put $r_n=v_n-v$ and $q=\int v^3$. The energy inner product
is defined, with the conjugate in the second factor, by
\[
 \langle w,z\rangle_c
 =\int_{\mathbb R}a_c(\xi)\widehat{w}(\xi)
                 \overline{\widehat{z}(\xi)}\,d\xi.
\]
For real $w,z$ this integral is real; it defines a norm equivalent
to the $H^{3/2}$ norm. In particular
$\la{r_n,v}_c\to0$ by weak convergence, and expanding the square gives
\begin{equation}\label{eq:splitQ}
Q_c(v_n)=Q_c(v)+Q_c(r_n)+o(1).
\end{equation}
Expanding the cube also gives
\begin{equation}\label{eq:splitC}
1=\int v^3+\int r_n^3+o(1).
\end{equation}
Indeed, on a fixed bounded interval the cross terms tend to zero by local $L^3$ convergence. On its complement they are bounded by
$3\norm v_{L^3(\mathrm{tail})}^2\norm{r_n}_3$ and
$3\norm v_{L^3(\mathrm{tail})}\norm{r_n}_3^2$; the tail tends to zero and $r_n$ is bounded in $L^3$.

Equations~\eqref{eq:homogeneity}--\eqref{eq:splitC} imply
\[
I_c\ge Q_c(v)+I_c|1-q|^{2/3}.
\]
If $q=0$, this would read $I_c\ge Q_c(v)+I_c$, impossible since
$v\ne0$ and $a_c>0$. For $q\ne0$, \eqref{eq:homogeneity} also yields
\[
1\ge |q|^{2/3}+|1-q|^{2/3}.
\]
If $0<q<1$, each positive number $q,1-q$ is smaller than one,
so $q^{2/3}>q$ and $(1-q)^{2/3}>1-q$; their sum is greater than one.
If $q<0$ then $|1-q|>1$, and if $q>1$ then $|q|>1$,
again contradicting the inequality. The sole remaining value is $q=1$.
Now admissibility gives $Q_c(v)\ge I_c$, while the preceding upper
bound gives $Q_c(v)\le I_c$. Therefore $Q_c(v)=I_c$ and
\eqref{eq:splitQ} implies $Q_c(r_n)\to0$. Coercivity yields strong
convergence $r_n\to0$ in $H^{3/2}$. This proves compactness modulo translations for every minimizing sequence after subsequence extraction.

\subsection{Euler equation, positivity and decay}
The map $N:H^{3/2}(\R;\R)\to\R$, $N(v)=\int v^3$,
is continuously Fréchet differentiable, with
$N'(v)[h]=3\int v^2h$. Indeed the exact remainder is
$3\int vh^2+\int h^3$, bounded by
$3\norm v_3\norm h_3^2+\norm h_3^3=o(\norm h_{H^{3/2}})$;
Hölder also proves continuity of the derivative.
At the minimizer, $N'(v)[v]=3N(v)=3$, hence $N'(v)\ne0$.
The real Hilbert-space multiplier theorem gives
\[
2\operatorname{Re}\int_{\mathbb R}
 a_c(\xi)\widehat{v}(\xi)\,\overline{\widehat{h}(\xi)}\,d\xi
=3\mu\int_{\mathbb R}v(x)^2h(x)\,dx,
\qquad h\in H^{3/2}(\R;\R).
\]
Thus $2A_cv=3\mu v^2$ as an equality in $H^{-3/2}$.
Putting $h=v$ gives $2I_c=3\mu$. Consequently
$A_cv=I_cv^2$ and $\phi_c=I_cv$ satisfies
$A_c\phi_c=I_c^2v^2=\phi_c^2$ in distributions.

For every $r\in\R$, the lower symbol bound gives
\[
\norm{A_c^{-1}g}_{H^{r+3}}
\le\min\{c,1\}^{-1}\norm g_{H^r}.
\]
For $r>1/2$, $H^r$ is an algebra: the weight-distribution and Fourier
$L^1$ argument used in \eqref{eq:spatial}, without $B$, proves
$\norm{wz}_{H^r}\le C_r\norm w_{H^r}\norm z_{H^r}$.
Starting with $\phi_c\in H^{3/2}$, its square belongs to $H^{3/2}$,
and the profile equation gives $\phi_c\in H^{9/2}$.
Repeating gives $\phi_c\in H^{3/2+3j}$ for all $j\ge0$, hence in
every $H^r$, $r\ge0$. There is no circular use of smoothness in the
multiplier equation or in this bootstrap.
Its physical convolution kernel is
$K_c=G_c*G_J$, where $G_c(x)=e^{-\sqrt c|x|}/(2\sqrt c)$ and $G_J$ is the Bessel kernel. The representation
\[
(1+\xi^2)^{-1/2}=\frac1{\sqrt\pi}\int_0^\infty t^{-1/2}e^{-t}e^{-t\xi^2}\,dt
\]
shows that $G_J$ is a positive mixture of heat kernels, with integral one. Thus $K_c>0$, $K_c\in L^1$ and $\int K_c=1/c$. From
$\phi_c=K_c*\phi_c^2$ we obtain strict positivity and $\phi_c\in L^1$; Tonelli gives the integral identity in \eqref{eq:profile}.

An $H^1$ function is uniformly continuous. Combined with $L^2$ integrability this implies decay at both infinities. Apply this to the profile and its available derivatives, and differentiate the integrated equation to verify the traveling wave. This proves the existence and regularity part of Theorem~\ref{thm:waves}.

The profile is a critical point of $\mathcal H+c\mathcal P$. Its second variation is $A_c-2\phi_c$. The cubic constraint used above is not a conserved constraint of the evolution; the minimization therefore does not by itself establish orbital stability.

\subsection{The resolvent kernel and exponential localization}
The positivity used above has further consequences. We use physical
convolution kernels, so that $A_c^{-1}g=K_c*g$ without an additional
Fourier normalization factor.

\begin{lemma}\label{lem:kernelstructure}
For $c>0$, $K_c$ is bounded, continuous, positive and even, and is
strictly decreasing as a function of $|x|$. Its integral is $1/c$.
If $0<\beta<\min\{\sqrt c,1\}$, then
\begin{equation}\label{eq:kernelweighted}
 M_\beta:=\int e^{\beta|x|}K_c(x)\,dx<\infty,
 \qquad \sup_x e^{\beta|x|}K_c(x)<\infty.
\end{equation}
\end{lemma}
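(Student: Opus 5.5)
We use the factorization $K_c=G_c*G_J$ with $G_c(x)=e^{-\sqrt c|x|}/(2\sqrt c)$ and the Bessel kernel written as a heat-kernel mixture,
\[
G_J(x)=\frac1{\sqrt\pi}\int_0^\infty t^{-1/2}e^{-t}\,\frac{e^{-x^2/(4t)}}{\sqrt{4\pi t}}\,dt .
\]
This is the subordination formula from the previous subsection, applied under Tonelli.

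\emph{Integral, positivity, evenness.} Both factors are positive and even, and $\int G_c=1/c$, $\int G_J=1$. Hence $K_c$ is positive and even, with $\int K_c=1/c$.

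\emph{Boundedness and continuity.} $G_c$ is bounded and uniformly continuous, and $G_J\in L^1$. The convolution of a bounded uniformly continuous function with an $L^1$ function is bounded and continuous, with $\|K_c\|_\infty\le\|G_c\|_\infty\|G_J\|_1$. This avoids the logarithmic singularity of $G_J$ at the origin.

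\emph{Strict monotonicity.} It suffices to show that the convolution of two even functions that are strictly decreasing in $|x|$ is again strictly decreasing in $|x|$. We do this through the heat-kernel mixture, so the factor $G_J$ can be replaced by a single Gaussian. Since $G_c$ is integrable, Tonelli gives
\[
K_c=\frac1{\sqrt\pi}\int_0^\infty t^{-1/2}e^{-t}\,(G_c*g_t)\,dt,\qquad g_t(x)=\frac{e^{-x^2/(4t)}}{\sqrt{4\pi t}}.
\]
Fix $t>0$ and $x>0$, and write $f=G_c$, which is even and nonincreasing in $|y|$. Differentiating under the integral and pairing $y$ with $-y$,
\[
\frac{d}{dx}(f*g_t)(x)=\int_0^\infty f(y)\bigl[g_t'(x-y)+g_t'(x+y)\bigr]\,dy .
\]
Integrating by parts moves the derivative onto $f$, whose distributional derivative is $f'\le0$ on $(0,\infty)$ with no jump at $0$ because $f$ is continuous. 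This gives
\[
\frac{d}{dx}(f*g_t)(x)=\int_0^\infty f'(y)\bigl[g_t(x-y)-g_t(x+y)\bigr]\,dy .
\]
For $x,y>0$ we have $|x-y|<x+y$, so the bracket is strictly positive. Since $f'<0$ on $(0,\infty)$, the derivative is strictly negative. Integrating in $t$ against the positive weight preserves the strict sign, so $K_c$ is strictly decreasing for $x>0$. By evenness it is strictly decreasing in $|x|$.

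\emph{Weighted bounds.} By the triangle inequality $e^{\beta|x|}\le e^{\beta|x-y|}e^{\beta|y|}$, hence
\[
M_\beta\le\int e^{\beta|x|}G_c\,dx\cdot\int e^{\beta|y|}G_J\,dy .
\]
The first factor is finite because $\beta<\sqrt c$. For the second, Tonelli in the subordination formula and the Gaussian moment $\int e^{\beta|y|}g_t\,dy\le 2e^{\beta^2 t}$ reduce it to
\[
\frac{2}{\sqrt\pi}\int_0^\infty t^{-1/2}e^{-(1-\beta^2)t}\,dt<\infty,
\]
which converges because $\beta<1$.

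For the pointwise bound we write
\[
e^{\beta|x|}K_c(x)\le\int \bigl(e^{\beta|x-y|}G_c(x-y)\bigr)\,e^{\beta|y|}G_J(y)\,dy\le\bigl\|e^{\beta|\cdot|}G_c\bigr\|_\infty\int e^{\beta|y|}G_J\,dy .
\]
The sup-norm factor is $(2\sqrt c)^{-1}$, finite since $\beta<\sqrt c$, and the integral was bounded in the previous step.

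\emph{Main obstacle.} The only delicate step is the strict monotonicity. The explicit form of $G_c$ and the Gaussian structure make it a one-line sign check; the rest is standard, though the integrations by parts and interchanges must be justified through the decay of both kernels.
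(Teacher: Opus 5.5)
Your proof is correct. Except for one step, it follows the paper: the same factorization $K_c=G_c*G_J$, the same heat-kernel subordination for $G_J$, the same bound $\|K_c\|_\infty\le\|G_c\|_\infty\|G_J\|_1$, and the same weighted convolution inequalities with the Gaussian moment bound $2e^{\beta^2t}$.

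The difference is the strict monotonicity. The paper also subordinates the other factor, via $(c+\xi^2)^{-1}=\int_0^\infty e^{-s(c+\xi^2)}\,ds$. With the semigroup law it writes $K_c=\pi^{-1/2}\iint t^{-1/2}e^{-t-cs}H_{t+s}\,ds\,dt$, where $H_{t+s}$ is your $g_{t+s}$. Every Gaussian $H_{t+s}$ is strictly decreasing in $|x|$. Hence $K_c(x_1)>K_c(x_2)$ for $0\le x_1<x_2$ follows pointwise, with no derivatives or boundary terms.

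You instead keep $G_c$ explicit. You show that $G_c*g_t$ is strictly decreasing by differentiating, pairing $\pm y$, and integrating by parts, using $G_c'<0$ on $(0,\infty)$ and $g_t(x-y)>g_t(x+y)$. The boundary term at $y=0$ vanishes because the bracket vanishes there. This is a direct proof of the strict form of the fact that convolution preserves even, radially decreasing functions. It needs only one factor to be a Gaussian mixture, at the cost of justifying differentiation under the integral. The paper's double subordination is shorter but needs both factors to be heat-kernel mixtures.

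When you pass from $G_c*g_t$ to $K_c$, the cleanest step is to integrate the strict inequality of values $(G_c*g_t)(x_1)>(G_c*g_t)(x_2)$ against the positive weight in $t$, rather than the derivative. If you do integrate the derivative, domination is easy: $(G_c*g_t)'=G_c'*g_t$ is bounded by $1/2$ uniformly in $t$.
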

\begin{proof}
Let $H_t(x)=(4\pi t)^{-1/2}e^{-x^2/(4t)}$ be the heat kernel.
The Laplace representation of $(c+\xi^2)^{-1}$, combined with the
representation of $J^{-1}$, gives by Tonelli
\begin{equation}\label{eq:kernelmixture}
 K_c(x)=\frac1{\sqrt\pi}\int_0^\infty\int_0^\infty
 t^{-1/2}e^{-t-cs}H_{t+s}(x)\,ds\,dt.
\end{equation}
Every heat kernel is positive and even. If $0\le x_1<x_2$, then
$H_{t+s}(x_1)>H_{t+s}(x_2)$ for every $t,s>0$, so the same strict
inequality holds for $K_c$. Finiteness at zero also follows from
$K_c=G_c*G_J$: the resolvent kernel $G_c$ is bounded and continuous,
and $G_J\in L^1$. This convolution is bounded and continuous.
Integrating the factors gives $\int K_c=1/c$.

For the weighted estimates, $e^{\beta|x|}\le
e^{\beta|x-y|}e^{\beta|y|}$ yields
\[
 \|e^{\beta|\cdot|}K_c\|_1
 \le\|e^{\beta|\cdot|}G_c\|_1
       \|e^{\beta|\cdot|}G_J\|_1.
\]
The first factor is finite for $\beta<\sqrt c$.
Completing the square in the Gaussian gives
$\int e^{\beta|x|}H_t(x)\,dx\le2e^{\beta^2t}$.
The heat representation therefore gives
\[
 \|e^{\beta|\cdot|}G_J\|_1
 \le\frac2{\sqrt\pi}\int_0^\infty
 t^{-1/2}e^{-(1-\beta^2)t}\,dt<\infty
 \quad(\beta<1).
\]
Finally the same weighted convolution inequality, with the $L^\infty$
norm on $G_c$, proves the second assertion of
\eqref{eq:kernelweighted}.
\end{proof}

\begin{proposition}\label{prop:exponential}
Let $\phi$ be a nonzero $H^{3/2}$ solution of $A_c\phi=\phi^2$,
$c>0$. For every $0<\beta<\min\{\sqrt c,1\}$,
\[
 e^{\beta|x|}\phi\in L^2(\mathbb R),\qquad
 0<\phi(x)\le C_{\phi,\beta}e^{-\beta|x|}.
\]
\end{proposition}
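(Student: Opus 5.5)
The plan is to work entirely with the fixed-point representation $\phi=K_c*\phi^2$, which holds for any $H^{3/2}$ solution because $A_c^{-1}g=K_c*g$. Lemma~\ref{lem:kernelstructure} then supplies positivity, the weighted integrability $M_\beta<\infty$ and the weighted sup bound. First, the bootstrap of the previous subsection applies verbatim to $\phi$, so $\phi\in\bigcap_rH^r$, $\phi$ is bounded and continuous, and $\phi(x)\to0$ at infinity. Positivity is immediate: $\phi^2\ge0$ is not identically zero and $K_c>0$, so $\phi=K_c*\phi^2>0$ everywhere.

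For the weighted $L^2$ bound I would run a continuity/absorption argument with truncated weights, to avoid assuming what is to be proved. Fix $\beta<\min\{\sqrt c,1\}$ and put $w_\varepsilon(x)=e^{\beta|x|/(1+\varepsilon|x|)}$. This weight is bounded for each $\varepsilon>0$, satisfies $w_\varepsilon(x)\le w_\varepsilon(x-y)e^{\beta|y|}$ (the map $r\mapsto\beta r/(1+\varepsilon r)$ is subadditive and increasing), and increases to $e^{\beta|x|}$ as $\varepsilon\downarrow0$. Choose $R$ such that $\sup_{|x|\ge R}\phi(x)\le\delta$, with $\delta$ small. Split $\phi^2=\phi^2\mathbf 1_{|x|<R}+\phi^2\mathbf 1_{|x|\ge R}$. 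Young's inequality and the submultiplicativity give
\[
 \|w_\varepsilon\phi\|_2\le M_\beta\|w_\varepsilon\phi^2\mathbf 1_{|x|<R}\|_2+M_\beta\delta\|w_\varepsilon\phi\|_2 .
\]
The first term is at most $e^{\beta R}\|\phi\|_4^2$, uniformly in $\varepsilon$. With $M_\beta\delta\le1/2$ the last term is absorbed; this is legitimate because $\|w_\varepsilon\phi\|_2<\infty$ for each fixed $\varepsilon$. Monotone convergence as $\varepsilon\to0$ then gives $e^{\beta|x|}\phi\in L^2$.

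For the pointwise bound I would apply the weighted convolution inequality once more, now with the $L^\infty$ weight on the kernel:
\[
 e^{\beta|x|}\phi(x)\le\int e^{\beta|x-y|}K_c(x-y)\,e^{\beta|y|}\phi(y)^2\,dy
 \le\sup(e^{\beta|\cdot|}K_c)\,\|e^{\beta|\cdot|/2}\phi\|_2^2 .
\]
The right side is finite by the $L^2$ step applied with exponent $\beta/2$, or in fact with $\beta$ itself, since $e^{\beta|y|}\phi^2\le\|\phi\|_\infty e^{\beta|y|}\phi$ times a bounded factor is not needed once we note $e^{\beta|y|/2}\phi\in L^2$. This yields $\phi(x)\le C_{\phi,\beta}e^{-\beta|x|}$.

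The main obstacle is the absorption step. The smallness must come from the decay of $\phi$ at infinity rather than from any a priori weighted information, and the truncated weight is essential so that the quantity being absorbed is known to be finite. The rest is a direct application of Lemma~\ref{lem:kernelstructure}.
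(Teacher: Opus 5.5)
Your proof is correct and follows the paper's argument essentially step for step. The shared steps are the representation $\phi=K_c*\phi^2$, a bounded truncated weight dominated by $e^{\beta|y|}$ times its own translate, absorption of the tail using $\sup_{|y|\ge R}\phi\le\delta$ with $M_\beta\delta\le 1/2$, monotone convergence, and the weighted supremum of $K_c$ for the pointwise bound. The paper only truncates differently, with $e^{\beta\min\{|x|,R\}}$ in place of your $e^{\beta|x|/(1+\varepsilon|x|)}$. The one blemish is your garbled sentence justifying finiteness at the end; it should simply read $\|e^{\beta|\cdot|/2}\phi\|_2\le\|e^{\beta|\cdot|}\phi\|_2<\infty$ by the weighted $L^2$ step.
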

\begin{proof}
The bootstrap and positivity argument above apply to any such solution:
$\phi$ is smooth, positive, bounded and tends to zero. We first obtain
a weighted $L^2$ bound without assuming it in advance. Set
$w_R(x)=e^{\beta\min\{|x|,R\}}$. Since the truncated distance is
one-Lipschitz,
$w_R(x)\le e^{\beta|x-y|}w_R(y)$.
The integral equation and Young's inequality give
\[
 \|w_R\phi\|_2\le M_\beta\|w_R\phi^2\|_2.
\]
Choose $L$ so large that $\delta:=\sup_{|y|>L}\phi(y)$ satisfies
$M_\beta\delta<1/2$. Splitting the square into its compact and tail
parts yields
\[
 \|w_R\phi^2\|_2
 \le e^{\beta L}\|\phi^2\|_2+\delta\|w_R\phi\|_2.
\]
Thus $\|w_R\phi\|_2\le2M_\beta e^{\beta L}\|\phi^2\|_2$ uniformly
in $R$. Monotone convergence gives $e^{\beta|x|}\phi\in L^2$.
In particular $\int e^{\beta|y|}\phi(y)^2dy<\infty$.
Using the weighted supremum of the kernel and
$|x|\le|x-y|+|y|$ now gives
\[
 e^{\beta|x|}\phi(x)
 \le\sup_z e^{\beta|z|}K_c(z)
       \int e^{\beta|y|}\phi(y)^2dy,
\]
which proves the pointwise bound.
\end{proof}
The admissible exponents reflect the two factors of the resolvent:
the speed-dependent scale $c^{-1/2}$ and the fixed Bessel scale.
No assertion of the optimal tail coefficient or endpoint exponent is made.
This localization differs from the algebraic BO profile in the comparison
above, although the high-frequency nonlinear multiplier here is bounded.
Profile tails depend on the full resolvent, not only its high-frequency order.

\subsection{Symmetry and strict monotonicity}
The following proof concerns the positive integral equation rather than
rearrangement of a form of order three. In particular, reflection
invariance of the minimization is not being used as a symmetry theorem.
The integral moving-plane strategy is classical \cite{chenliou}; the
kernel assumptions needed for this version have just been verified.

\begin{theorem}\label{thm:symmetry}
Every nonzero real $H^{3/2}$ solution of $A_c\phi=\phi^2$, $c>0$, is
symmetric about a unique point $x_0$ and is strictly decreasing with
distance from $x_0$. In particular $\phi(x_0+\cdot)$ is even and
strictly decreasing on $(0,\infty)$.
\end{theorem}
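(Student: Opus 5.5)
We would run the integral moving-plane method on the equation $\phi=K_c*\phi^2$. By Proposition~\ref{prop:exponential} and the bootstrap, $\phi$ is smooth, strictly positive, and decays exponentially, with $\phi\in L^1\cap L^\infty$. For $\lambda\in\R$, set $x^\lambda=2\lambda-x$, $\phi_\lambda(x)=\phi(x^\lambda)$, $\Sigma_\lambda=\{x<\lambda\}$ and $w_\lambda=\phi_\lambda-\phi$. Since $K_c$ is even, we have $|x-y^\lambda|=|x^\lambda-y|$, and splitting the convolution over $\Sigma_\lambda$ and its reflection gives, for $x\in\Sigma_\lambda$,
\[
 w_\lambda(x)=\int_{\Sigma_\lambda}\bigl(K_c(x-y)-K_c(x-y^\lambda)\bigr)
 \bigl(\phi_\lambda(y)^2-\phi(y)^2\bigr)\,dy .
\]
For $x,y\in\Sigma_\lambda$ we have $|x-y|<|x-y^\lambda|$. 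Lemma~\ref{lem:kernelstructure}, specifically the strict radial decrease of $K_c$, then makes the kernel difference strictly positive.

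The first step starts the plane. Let $\Sigma_\lambda^-=\{x\in\Sigma_\lambda:w_\lambda(x)<0\}$. On this set, $0\le\phi_\lambda^2-\phi^2\ge$ fails, so we drop the positive contributions where $w_\lambda\ge0$ and estimate
\[
 |w_\lambda(x)|\le\int_{\Sigma^-_\lambda}K_c(x-y)\,2\phi(y)|w_\lambda(y)|\,dy
\]
on $\Sigma_\lambda^-$, using $|\phi^2-\phi_\lambda^2|\le2\phi|w_\lambda|$ there. Young's inequality gives $\|w_\lambda\|_{L^2(\Sigma^-_\lambda)}\le 2\|K_c\|_1\,\sup_{\Sigma^-_\lambda}\phi\,\|w_\lambda\|_{L^2(\Sigma^-_\lambda)}$, with $\|K_c\|_1=1/c$. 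For $\lambda$ very negative, $\sup_{x<\lambda}\phi<c/4$ because $\phi\to0$. Hence $\Sigma^-_\lambda$ is null and $w_\lambda\ge0$. Here $w_\lambda\in L^2(\Sigma_\lambda)$ holds because $\phi\in L^2$.

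The second step moves the plane. Define $\lambda_0=\sup\{\lambda: w_\mu\ge0 \text{ on } \Sigma_\mu \text{ for all } \mu\le\lambda\}$. This supremum is finite: for large $\lambda$ we have $w_\lambda(x)=\phi(2\lambda-x)-\phi(x)<0$ at any fixed $x$, since $\phi(x)>0$ and $\phi(2\lambda-x)\to0$. The expected main obstacle is showing $w_{\lambda_0}\equiv0$.

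Suppose instead that $w_{\lambda_0}\ge0$ is not identically zero on $\Sigma_{\lambda_0}$. The integral identity with the strictly positive kernel difference then gives $w_{\lambda_0}>0$ everywhere on $\Sigma_{\lambda_0}$. To push past $\lambda_0$, we take $\lambda_k\downarrow\lambda_0$ with $\Sigma^-_{\lambda_k}$ of positive measure and reuse the $L^2$ inequality. This requires $2c^{-1}\sup_{\Sigma^-_{\lambda_k}}\phi<1$, or more precisely a refined version: instead of Young's inequality on $L^\infty$, we use Hölder with $\|K_c\|_{L^2}$ and $\|\phi\|_{L^2(\Sigma^-_{\lambda_k})}$ (or an $L^{p}$ variant). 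For this we need $|\Sigma^-_{\lambda_k}\cap[-R,\lambda_k]|\to0$ together with smallness of $\phi$ on $(-\infty,-R)$. The smallness on the far left comes from decay. The shrinking measure on compacts comes from $w_{\lambda_k}\to w_{\lambda_0}>0$ locally uniformly, by continuity of $\phi$, combined with Egorov on a thin strip near $\lambda_0$. This contradiction shows $w_{\lambda_0}\equiv0$, so $\phi$ is symmetric about $x_0:=\lambda_0$.

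The final step gives strict monotonicity and uniqueness of the center. For every $\lambda<x_0$, the function $w_\lambda$ is $\ge0$ and not identically zero, since otherwise $\phi$ would have two symmetry centers and hence be periodic, contradicting decay. The strictness argument then gives $w_\lambda>0$ on $\Sigma_\lambda$. Thus $\phi(2\lambda-x)>\phi(x)$ for $x<\lambda<x_0$, i.e.\ $\phi$ is strictly increasing on $(-\infty,x_0)$. By symmetry it is strictly decreasing on $(x_0,\infty)$. Uniqueness of $x_0$ follows from the same periodicity argument.
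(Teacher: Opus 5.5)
Your proposal is correct and follows essentially the same integral moving-plane argument as the paper. The steps match: the same reflection identity, starting the plane from the far left, the strictness alternative at $\lambda_0$, pushing past $\lambda_0$ via far-left smallness plus a thin strip of small measure, and periodicity to get strict monotonicity and a unique center. The only real difference is the norm in which you close the contraction. You work in $L^2(\Sigma^-_\lambda)$, using $\|K_c\|_1$ and then $\|K_c\|_2\|\phi\|_{L^2(\Sigma^-_\lambda)}$; the paper uses the supremum $D_\lambda$, with $\|K_c\|_1$ and $\|K_c\|_\infty\|\phi\|_\infty$ times the strip length. Your appeal to Egorov is unnecessary: uniform convergence on $[-R,\lambda_0-\delta]$ already confines $\Sigma^-_{\lambda_k}\cap[-R,\lambda_k]$ to a strip of length $\delta+\lambda_k-\lambda_0$.
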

\begin{proof}
For $\lambda\in\mathbb R$, write $x^\lambda=2\lambda-x$,
$\Sigma_\lambda=(-\infty,\lambda)$ and
$d_\lambda(x)=\phi(x)-\phi(x^\lambda)$ on $\Sigma_\lambda$.
Split the convolution into the two half-lines and reflect the second
one. Evenness of $K_c$ gives the exact identity
\begin{equation}\label{eq:reflectionidentity}
 d_\lambda(x)=\int_{\Sigma_\lambda}
 [K_c(x-y)-K_c(x^\lambda-y)]
 [\phi(y)^2-\phi(y^\lambda)^2]dy.
\end{equation}
For $x,y<\lambda$,
$|x-y|<|x^\lambda-y|$, so the bracket involving $K_c$ is strictly
positive. All integrals are legitimate: $K_c$ is bounded and
$\phi^2\in L^1$, and the reflection preserves integrability.

Let $E_\lambda=\{y<\lambda:d_\lambda(y)>0\}$ and
$D_\lambda=\sup_{x<\lambda}(d_\lambda(x))_+$.
Discard the nonpositive contribution outside $E_\lambda$ in
\eqref{eq:reflectionidentity}. On $E_\lambda$,
$\phi(y^\lambda)<\phi(y)$, hence
\begin{equation}\label{eq:planebound}
 D_\lambda\le
 2D_\lambda\sup_{x<\lambda}
 \int_{E_\lambda}K_c(x-y)\phi(y)\,dy.
\end{equation}
The supremum is finite since $\phi$ is bounded and $K_c\in L^1$.
If $\lambda$ is sufficiently negative, then
$2\|K_c\|_1\sup_{y<\lambda}\phi(y)<1$.
Equation~\eqref{eq:planebound} forces $D_\lambda=0$.
Thus $\phi(x)\le\phi(x^\lambda)$ starts on the far left.

Let $\lambda_0$ be the supremum of the positions up to which this
inequality holds for every preceding plane. It is finite: if it held
for all planes, then for any $x_1<x_2$, taking
$\lambda=(x_1+x_2)/2$ would make $\phi(x_1)\le\phi(x_2)$;
a positive function tending to zero at $+\infty$ cannot have this
property. Continuity gives $d_{\lambda_0}\le0$.

If $d_{\lambda_0}$ is not identically zero, then it is strictly
negative at every point of $\Sigma_{\lambda_0}$. Indeed, it is negative
on some open interval, and the positive kernel difference in
\eqref{eq:reflectionidentity} propagates the strict inequality to
every $x<\lambda_0$.
We show that strict inequality permits further movement.
Choose $R$ large with $-R<\lambda_0-1$ and
\[
 2\|K_c\|_1\sup_{y<-R}\phi(y)<\tfrac14.
\]
Then choose $0<\epsilon<1$ so that
$4\epsilon\|K_c\|_\infty\|\phi\|_\infty<1/4$.
On the compact interval $[-R,\lambda_0-\epsilon]$, strict negativity
is uniform. Uniform continuity of the reflected values therefore gives
$d_\mu<0$ there for every $\mu>\lambda_0$ sufficiently close to
$\lambda_0$, with $\mu<\lambda_0+\epsilon$.
Consequently
\[
 E_\mu\subset(-\infty,-R)\cup(\lambda_0-\epsilon,\mu),
\]
and the second interval has length at most $2\epsilon$.
The right side coefficient in \eqref{eq:planebound} is now at most
\[
 2\|K_c\|_1\sup_{y<-R}\phi(y)
 +4\epsilon\|K_c\|_\infty\|\phi\|_\infty<\tfrac12.
\]
Thus $D_\mu=0$ for all these planes, contradicting the definition of
$\lambda_0$. We conclude $d_{\lambda_0}=0$, which proves symmetry.

For $\lambda<\lambda_0$, the inequality is strict. Otherwise the
strictness alternative just proved would give a second symmetry center.
Two different reflection centers would make $\phi$ periodic, impossible
for a nonzero function tending to zero at infinity. Given
$x_1<x_2\le\lambda_0$, the midpoint plane therefore gives
$\phi(x_1)<\phi(x_2)$. Reflection proves strict decrease on the right.
It also makes $\lambda_0$ the unique point of maximum, establishing
uniqueness of the center but not uniqueness of the profile.
\end{proof}
The existence, symmetry and localization assertions of
Theorem~\ref{thm:waves} are now proved.

\subsection{A small-speed KdV limit for minimizing profiles}
The low-frequency expansion of the Bessel filter suggests a KdV limit
for small-speed profiles. We prove this limit variationally, without
assuming uniqueness of the Bessel profiles; it then supplies the base
profile for the stability perturbation.
Throughout this subsection profiles are centered at their unique maximum.

\begin{theorem}\label{thm:kdvlimit}
Let $\phi_c$ be any centered minimizing profile constructed above. Then
\begin{equation}\label{eq:kdvlimit}
 q_c(y):=c^{-1}\phi_c(y/\sqrt c)
 \longrightarrow q_0(y):=\tfrac32\operatorname{sech}^2(y/2)
 \quad\text{strongly in }H^1(\mathbb R),\qquad c\downarrow0.
\end{equation}
The minimum and quadratic invariant have the asymptotics
\begin{equation}\label{eq:smallasymptotics}
 I_c^3\sim\tfrac{36}{5}c^{5/2},\qquad
 \mathcal P(\phi_c)\sim3c^{3/2}.
\end{equation}
The conclusions hold for every choice of centered minimizers, without
an assumed smooth branch in $c$.
\end{theorem}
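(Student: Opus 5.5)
The argument is variational. The minimization problem is rescaled so that it converges, as $c\downarrow0$, to the KdV minimization problem. Compactness of minimizing sequences for the limiting problem, together with the ODE classification of its positive solutions, then identifies every limit point.

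\textbf{Rescaled functional.} For $v\in H^{3/2}$ write $v(x)=\alpha\, w(\sqrt c\,x)$ with $\widehat v(\xi)=\alpha c^{-1/2}\widehat w(\xi/\sqrt c)$. Substituting $\xi=\sqrt c\,\eta$ gives
\[
Q_c(v)=\alpha^2c^{3/2}\int(1+\eta^2)\langle\sqrt c\,\eta\rangle|\widehat w(\eta)|^2\,d\eta=:\alpha^2c^{3/2}\widetilde Q_c(w),
\qquad \int v^3=\alpha^3c^{-1/2}\int w^3 .
\]
Choosing $\alpha=c^{1/6}$ keeps the constraint $\int w^3=1$, and then $I_c=c^{11/6}\widetilde I_c$, where $\widetilde I_c=\inf\{\widetilde Q_c(w):\int w^3=1\}$. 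Since $\langle\sqrt c\,\eta\rangle\ge1$, we have $\widetilde Q_c\ge Q_0:=\int(1+\eta^2)|\widehat w|^2=\|w\|_{H^1}^2$. This gives $\widetilde I_c\ge I_0$, the KdV constant. Conversely, testing $\widetilde I_c$ with the Schwartz KdV minimizer and using dominated convergence gives $\limsup\widetilde I_c\le I_0$. Hence $\widetilde I_c\to I_0$.

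The value of $I_0$ comes from the explicit profile. The KdV minimizer is $w_0=q_0/I_0$, since the Euler equation $(1-\partial^2)w=I_0w^2$ has solution $q_0/I_0$. With $\int q_0^2=6$ and $\int q_0^3=36/5$, the constraint gives $I_0^3=\int q_0^3=36/5$. I expect the statement's $I_c^3\sim\frac{36}{5}c^{5/2}$ to correspond to this up to the precise normalization; I will reconcile the exponent at that point and not assert it blindly. The same scaling shows $q_c=c^{-1}\phi_c(\cdot/\sqrt c)=(c^{-1}I_c)\,c^{1/6}w_c=\widetilde I_c w_c$, where $w_c$ is the rescaled normalized minimizer.

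\textbf{Compactness and identification.} Let $w_c$ be the rescaled minimizers. They satisfy $\int w_c^3=1$ and $\|w_c\|_{H^1}^2\le\widetilde Q_c(w_c)=\widetilde I_c\to I_0$, so they form a minimizing sequence for the KdV problem. The nonvanishing and splitting arguments of Section~\ref{sec:waves} apply verbatim with $a_c$ replaced by $1+\eta^2$. They give, modulo translations $y_c$, strong $H^1$ convergence along subsequences to a KdV minimizer $w$.

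This limit is a positive solution of $-w''+w=I_0w^2$. Its ODE first integral $w'^2=w^2-\frac{2I_0}{3}w^3$, with decay at infinity, forces $w=q_0(\cdot-y)/I_0$; this is the ODE classification referred to in the table. Each $w_c$ is even about $0$, since the rescaling preserves the center. Even functions form a closed set, so we may take $y_c=0$ and the limit is exactly $q_0/I_0$. One point needs care. If the translations $y_c$ were unbounded, the limit of the translates would be even about a limit point of $-y_c$; but evenness of $w_c$ about $0$ together with strong convergence of the translates forces $y_c$ to stay bounded. Because the limit is unique, the whole family converges, so $q_c=\widetilde I_c w_c\to I_0\cdot q_0/I_0=q_0$ in $H^1$.

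\textbf{Asymptotics of $\mathcal P$.} We have
\[
\mathcal P(\phi_c)=\tfrac12\int\langle\xi\rangle|\widehat\phi_c|^2\,d\xi=\tfrac12c^{3/2}\int\langle\sqrt c\,\eta\rangle|\widehat q_c(\eta)|^2\,d\eta .
\]
The weight tends to $1$ pointwise, and its excess satisfies $\langle\sqrt c\,\eta\rangle-1\le\sqrt c\,|\eta|\le \sqrt c\,\langle\eta\rangle^2$. Since $q_c$ is bounded in $H^1$, the excess contributes $O(\sqrt c)$. Hence $\mathcal P(\phi_c)=\frac12c^{3/2}(\|q_0\|_2^2+o(1))=3c^{3/2}(1+o(1))$.

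\textbf{Main obstacle.} The step I expect to be hardest is the passage to the limit uniformly over all choices of minimizers. The concentration--compactness argument must be run along an arbitrary sequence $c_n\to0$ with varying functionals $\widetilde Q_{c_n}$. The splitting inequality only needs the lower bound $\widetilde Q_{c}\ge Q_0$ and the convergence $\widetilde I_{c}\to I_0$, and I will check that these suffice.
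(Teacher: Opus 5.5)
Your route is the paper's own. You rescale to the weight $(1+\eta^2)\sqrt{1+c\eta^2}$ and squeeze $I_0\le\widetilde I_c\le\widetilde Q_c(w_0)\to I_0$. You treat the rescaled normalized minimizers as a minimizing sequence for the single fixed $H^1$ problem, so the varying-functional worry in your last paragraph never arises. You then classify the limit by the first integral, remove translations using the centering, and compute $\mathcal P$ by Fourier scaling.

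There is, however, a concrete error that leaves the first asymptotic in \eqref{eq:smallasymptotics} unproved and makes the proposal internally inconsistent. Under $\xi=\sqrt c\,\eta$ three powers of $c$ appear:
\begin{itemize}
\item $c$ from $c+\xi^2=c(1+\eta^2)$;
\item $c^{-1}$ from $|\widehat v(\xi)|^2=\alpha^2c^{-1}|\widehat w(\eta)|^2$;
\item $c^{1/2}$ from $d\xi$.
\end{itemize}
Thus $Q_c(v)=\alpha^2c^{1/2}\widetilde Q_c(w)$, not $\alpha^2c^{3/2}\widetilde Q_c(w)$; you dropped the factor $c^{-1}$. With $\alpha=c^{1/6}$ this gives $I_c=c^{5/6}\widetilde I_c$. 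Then $\widetilde I_c^{\,3}\to I_0^3=\int q_0^3=36/5$ gives $I_c^3\sim\frac{36}{5}c^{5/2}$ exactly as stated, and there is no normalization to reconcile.

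The slip is not harmless as written. Your identity $q_c=(c^{-1}I_c)\,c^{1/6}w_c=\widetilde I_c w_c$ carries both the $H^1$ convergence and the $\mathcal P$ asymptotic, and it holds only with the corrected exponent. With your $I_c=c^{11/6}\widetilde I_c$ it would read $q_c=c\,\widetilde I_c w_c\to0$. Once the exponent is fixed, the rest of the argument is consistent.

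A smaller point concerns the translations. The step ``even functions form a closed set, so we may take $y_c=0$'' does not stand alone: closedness only helps once you know $y_c$ is bounded. Make the boundedness argument explicit:
\begin{itemize}
\item $f_c=w_c(\cdot+y_c)$ satisfies $f_c(x)=f_c(-2y_c-x)$.
\item $f_c\to w$ uniformly, by $H^1\hookrightarrow L^\infty$.
\item If $|y_c|\to\infty$ along a subsequence, then $|f_c(x)|\le|w(-2y_c-x)|+\|f_c-w\|_\infty\to0$ for every $x$, forcing $w=0$.
\end{itemize}
Then extract $y_c\to\bar y$ and use continuity of translations in $H^1$ to get $w_c\to w(\cdot-\bar y)$. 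Closedness of even functions now identifies the limit as the centered $q_0/I_0$. The paper runs this step with the unique maximum at zero instead of evenness; both come from Theorem~\ref{thm:symmetry}.

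Your $O(\sqrt c)$ bound on the weight excess in $\mathcal P$ is cruder than the paper's $c\eta^2/2$, but it suffices.
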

\begin{proof}
For $v(x)=c^{1/6}w(\sqrt c\,x)$, direct changes of variables give
\[
 \int v^3=\int w^3,\qquad
 Q_c(v)=c^{5/6}\mathcal Q_c(w),
\]
where
\[
 \mathcal Q_c(w)=\int(1+\eta^2)\sqrt{1+c\eta^2}
                         |\widehat w(\eta)|^2d\eta.
\]
Consequently $I_c=c^{5/6}\mathcal I_c$, where $\mathcal I_c$ is
the minimum of $\mathcal Q_c$ on the unit cubic constraint in
$H^{3/2}$. Define also the limiting minimum
\[
 \mathcal I_0=\inf\{\|w\|_{H^1}^2:w\in H^1(\mathbb R;\mathbb R),
                                      \ \int w^3=1\}.
\]
It is positive by $H^1\hookrightarrow L^3$ and finite by a test function.
The nonvanishing and splitting proof in Section~\ref{sec:waves} applies
with $Q_c$ replaced by $\|\cdot\|_{H^1}^2$: a minimizing sequence is
bounded in $H^1$, the interval estimate \eqref{eq:nonvanishing} prevents
vanishing, local compactness gives strong local $L^2$ and $L^3$
convergence, and the two splittings force the limiting cubic mass to
equal one. Thus the limiting minimum is attained, and its minimizing
sequences are compact in $H^1$ modulo translations.

For completeness, its minimizer can be identified uniquely. The multiplier
equation and rescaling give $q=\mathcal I_0w$ satisfying
$-q''+q=q^2$. Inversion of $1-\partial_y^2$ makes $q$ strictly positive,
and the one-dimensional algebra bootstrap makes it smooth. The $H^1$
bound and the equation give decay of $q$ and $q'$ at infinity.
Multiplication by $q'$ and integration yield
\[
 (q')^2=q^2-\tfrac23q^3,
\]
with integration constant zero by this decay. A maximum exists, and
the identity gives its value $3/2$. The second-order ODE with initial
data $q(0)=3/2$, $q'(0)=0$ has a unique solution; direct substitution
identifies it with $q_0$, up to translation. Since
\[
 \int q_0^3=\tfrac{36}{5},\qquad
 \|q_0\|_2^2=6,
\]
where $z=\tanh(y/2)$ gives
$\int q_0^3=(27/4)\int_{-1}^1(1-z^2)^2dz
=(27/4)(16/15)=36/5$,
and $Q_0(q_0)=\int q_0^3$, normalization yields
$\mathcal I_0=(36/5)^{1/3}$ and the centered minimizer is
$w_0=q_0/\mathcal I_0$.

The pointwise inequality $\sqrt{1+c\eta^2}\ge1$ gives
$\mathcal I_c\ge\mathcal I_0$. Conversely the smooth minimizer
$w_0$ is admissible for every $c>0$. For $0<c\le1$,
the integrand in $\mathcal Q_c(w_0)$ is dominated by
$(1+\eta^2)^{3/2}|\widehat w_0|^2$, which is integrable.
Dominated convergence gives
\[
 \mathcal I_0\le\mathcal I_c\le\mathcal Q_c(w_0)
 \longrightarrow\mathcal I_0.
\]
Let $w_c$ be the dilation of the normalized minimizer
$v_c=\phi_c/I_c$. Then $\int w_c^3=1$, and
\[
 \mathcal I_0\le\|w_c\|_{H^1}^2
 \le\mathcal Q_c(w_c)=\mathcal I_c\longrightarrow\mathcal I_0.
\]
Thus along every sequence $c_n\downarrow0$, $w_{c_n}$ is a minimizing
sequence for the limiting problem. After translations and extraction
it converges strongly in $H^1$ to a translate of $w_0$.

We remove those translations using the already proved shape information.
Each $w_c$ is positive and has its unique maximum at zero. Strong
$H^1$ convergence of translated functions implies uniform convergence,
since $H^1\hookrightarrow L^\infty$. Their maximum positions cannot
escape to infinity: the nonzero limiting maximum persists, whereas the
limit is uniformly small outside a fixed bounded interval. Hence the
translation parameters are bounded. Passing to a further subsequence
gives strong convergence of the unshifted $w_c$. The limit has a maximum
at zero by uniform convergence, and the limiting ODE classification
therefore identifies it with the centered $w_0$. As every sequence has
a subsequence with this same limit, the entire family satisfies
$w_c\to w_0$ in $H^1$.

The dilations give $q_c=\mathcal I_c w_c$, proving
\eqref{eq:kdvlimit}. Cubing $I_c=c^{5/6}\mathcal I_c$ gives the first
asymptotic formula. For the second, Fourier scaling gives exactly
\[
 2\mathcal P(\phi_c)=c^{3/2}
 \int\sqrt{1+c\eta^2}|\widehat q_c(\eta)|^2d\eta.
\]
Since $0\le\sqrt{1+c\eta^2}-1\le c\eta^2/2$, the difference of
this integral from $\|q_c\|_2^2$ is at most
$\tfrac c2\|q_c'\|_2^2$, which tends to zero by the $H^1$ convergence.
The limit is $\|q_0\|_2^2=6$, yielding
$\mathcal P(\phi_c)\sim3c^{3/2}$.
\end{proof}
These asymptotics do not determine the sign of
$d\mathcal P(\phi_c)/dc$ on an arbitrary branch. An asymptotic formula
for a function alone cannot be differentiated without further control.
The small-speed verification below adds derivative control before drawing a slope conclusion; outside that regime the hypothesis is retained.

\section{Verification of the stability hypotheses and orbital stability}\label{sec:stability}
Fix a minimizing profile $\phi=\phi_c$, centered so that it is even,
and let
\[
 \mathcal S_c=\mathcal H+c\mathcal P,\qquad
 L_c=A_c-2\phi.
\]
We work in the real energy space $X=H^{3/2}(\mathbb R)$.
The form of $L_c$ is continuous on $X$, with associated self-adjoint
operator on $L^2$ of domain $H^3$. A bounded potential is sufficient
for this domain statement, since $A_c$ is self-adjoint with domain
$H^3$ and $\phi\in L^\infty$.

\subsection{The negative index and the translation mode}
\begin{proposition}\label{prop:index}
The essential spectrum of $L_c$ is $[c,\infty)$. It has exactly one
negative eigenvalue, counted with multiplicity, and
$L_c\phi'=0$. In particular the negative eigenvalue is simple.
These assertions do not exclude additional zero modes.
\end{proposition}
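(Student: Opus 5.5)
The plan has four parts: the essential spectrum by Weyl's theorem, at least one negative direction from the profile itself, at most one from the constrained minimization, and the translation mode by differentiating the equation.

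\emph{Essential spectrum.} The symbol $a_c(\xi)=(c+\xi^2)\langle\xi\rangle$ is even and increasing in $|\xi|$. Its minimum is $a_c(0)=c$, and its range is $[c,\infty)$, so $\sigma(A_c)=\sigma_{\rm ess}(A_c)=[c,\infty)$. By Proposition~\ref{prop:exponential} and the regularity in Theorem~\ref{thm:waves}, the potential $2\phi$ is bounded and decays exponentially. Hence multiplication by $2\phi$ is relatively compact with respect to $A_c$. Indeed, $\phi\,(A_c+1)^{-1}$ is a norm limit of operators $\chi_R\phi\,(A_c+1)^{-1}$ with compactly supported cutoffs, and these are compact by Rellich, since $(A_c+1)^{-1}$ maps $L^2$ into $H^3$. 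Weyl's theorem then gives $\sigma_{\rm ess}(L_c)=[c,\infty)$. Consequently the spectrum below $c$ consists of isolated eigenvalues of finite multiplicity, and the negative part is counted by min--max.

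\emph{At least one negative direction.} Pair the profile equation with $\phi$:
\[
\langle L_c\phi,\phi\rangle=\langle A_c\phi,\phi\rangle-2\int\phi^3=-\int\phi^3<0,
\]
since $\phi>0$. So $n_-(L_c)\ge1$.

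\emph{At most one negative direction.} This is the step I expect to be the main obstacle: a careful second-order computation for the constrained minimum, with the correct normalization. Set $v=\phi/I_c$, the minimizer of $Q_c=\langle A_c\cdot,\cdot\rangle$ on $\{\int v^3=1\}$, with $A_cv=I_cv^2$. Take $h\in H^{3/2}$ with $\int v^2h=0$, i.e.\ $\int\phi^2h=0$, the tangent space of the constraint. Consider the curve
\[
\gamma(s)=\frac{v+sh}{\bigl(\int(v+sh)^3\bigr)^{1/3}}.
\]
The map $s\mapsto Q_c(\gamma(s))$ is $C^2$ near $0$ and has a minimum at $s=0$. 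I will expand $\int(v+sh)^3=1+3s^2\int vh^2+O(s^3)$, where the linear term vanishes by orthogonality. This gives
\[
Q_c(\gamma(s))=Q_c(v)+s^2\Bigl[Q_c(h)-2Q_c(v)\int vh^2\Bigr]+O(s^3).
\]
Using $Q_c(v)=I_c$ and $I_cv=\phi$, the bracket equals $\langle A_ch,h\rangle-2\int\phi h^2=\langle L_ch,h\rangle$. Hence $\langle L_ch,h\rangle\ge0$ on a closed subspace of codimension one, namely the $L^2$-orthogonal complement of $\phi^2\in L^2$. By min--max, $L_c$ has at most one negative eigenvalue counted with multiplicity. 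Combined with the previous step, there is exactly one, and it is simple.

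\emph{Translation mode.} Differentiate $A_c\phi=\phi^2$ in $x$. Since $A_c$ commutes with $\partial_x$, this gives $A_c\phi'=2\phi\phi'$, i.e.\ $L_c\phi'=0$. Smoothness from Theorem~\ref{thm:waves} puts $\phi'\in H^3$, the operator domain. Moreover $\phi'\neq0$ by the strict monotonicity of Theorem~\ref{thm:symmetry}. Nothing in this argument excludes further zero modes, in agreement with the statement.
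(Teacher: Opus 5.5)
Your proposal is correct and matches the paper's proof step by step: Weyl's theorem via compactness of the decaying potential, the test direction $\phi$ giving $\langle L_c\phi,\phi\rangle=-\int\phi^3<0$, nonnegativity of the form on the codimension-one space $\{\int\phi^2h=0\}$ from a renormalized curve through the minimizer, and differentiation of the profile equation for the translation mode. The only difference is cosmetic: you normalize to $v=\phi/I_c$ with $\int v^3=1$ and expand directly, whereas the paper keeps $\phi$ on the constraint $\int u^3=\int\phi^3$ and differentiates the scalar prefactor twice; both give the same bracket $\langle L_ch,h\rangle$.
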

\begin{proof}
The symbol $a_c(\xi)$ is continuous, has minimum $c$, and increases
to infinity with $|\xi|$, so $A_c$ has spectrum $[c,\infty)$.
Multiplication by $\phi$ is compact from $H^3$ to $L^2$: for a
bounded sequence in $H^3$, local Rellich compactness gives strong
$L^2$ convergence on each bounded interval after extraction, whereas
the tail is bounded by
$\sup_{|x|>R}|\phi(x)|$ times the global $L^2$ bound.
The latter supremum tends to zero. Therefore
$\phi(A_c+i)^{-1}$ is compact. The essential spectrum is unchanged
under this relatively compact self-adjoint perturbation, and all
spectral points below $c$ are isolated eigenvalues of finite multiplicity.

The constrained minimum gives nonnegativity of the form on
\[
 T=\{h\in X:\int\phi^2h=0\}.
\]
To verify this explicitly, put $N_0=\int\phi^3=Q_c(\phi)>0$ and
\[
 \gamma(t)=\left(\frac{N_0}{\int(\phi+th)^3}\right)^{1/3}
              (\phi+th),\qquad h\in T.
\]
For small real $t$ the denominator is positive, and $\gamma$ lies
on the constraint $\int\gamma^3=N_0$.
Homogeneity of the original minimization makes $\phi$ a minimum on
this constraint. Write $n(t)=\int(\phi+th)^3$ and $a(t)=(N_0/n(t))^{1/3}$,
so that $\gamma(t)=a(t)(\phi+th)$.
The tangent condition gives $n'(0)=3\int\phi^2h=0$, whereas
$n''(0)=6\int\phi h^2$. Differentiating the scalar prefactor yields
$a(0)=1$, $a'(0)=0$ and $a''(0)=-2\int\phi h^2/N_0$.
Thus $\gamma'(0)=h$ and $\gamma''(0)=a''(0)\phi$. Since
$Q_c(\phi)=N_0$, the quadratic form gives
\[
 \left.\frac{d^2}{dt^2}Q_c(\gamma(t))\right|_{t=0}
 =2Q_c(h)+2a''(0)Q_c(\phi)
 =2Q_c(h)-4\int\phi h^2.
\]
Hence
\[
 0\le\left.\frac{d^2}{dt^2}Q_c(\gamma(t))\right|_{t=0}
 =2\langle L_ch,h\rangle.
\]
Since $T$ has codimension one, a two-dimensional negative spectral
subspace would meet $T$ nontrivially, contradicting this inequality.
There is at least one negative direction because
\[
 \langle L_c\phi,\phi\rangle=-\int\phi^3<0.
\]
The spectral theorem gives exactly one negative eigenvalue.
Finally differentiation of $A_c\phi=\phi^2$, justified by smoothness,
gives $L_c\phi'=0$. Since $\phi$ is nonconstant, $\phi'\ne0$.
\end{proof}

\subsection{The branch and the slope condition}
Assume at a fixed speed $c_0>0$ that
\begin{equation}\label{eq:ND}
 \ker L_{c_0}=\operatorname{span}\{\phi_{c_0}'\}.
\end{equation}
For now this is a hypothesis, not a consequence of Proposition~\ref{prop:index}. It will be verified at sufficiently small speeds in Theorem~\ref{thm:smallspeedstability}.
Since the centered profile is even, its derivative is odd. The kernel
is thus trivial on the even subspace. On that subspace
$L_{c_0}:H^3_{\rm even}\to L^2_{\rm even}$ is a bounded bijection:
Fredholm index zero follows from the compact perturbation of the
invertible map $A_{c_0}$, and injectivity follows from
\eqref{eq:ND}. Its inverse is bounded.
The map
\[
 F:(0,\infty)\times H^3_{\rm even}\longrightarrow L^2_{\rm even},
 \qquad F(c,v)=J(c-\partial_x^2)v-v^2
\]
is continuously differentiable, since $H^3$ is an algebra and the
linear symbol has order three. The implicit function theorem therefore
gives a locally unique $C^1$ even branch $c\mapsto\phi_c$ through
$\phi_{c_0}$. This is uniqueness near the fixed profile in the even
$H^3$ space; it is not a classification of all profiles at that speed.
Differentiating the equation gives
\begin{equation}\label{eq:slopeidentity}
 L_{c_0}\dot\phi=-J\phi,\qquad
 p':=\left.\frac{d}{dc}\mathcal P(\phi_c)\right|_{c=c_0}
 =\langle J\phi,\dot\phi\rangle,
\end{equation}
where $\phi=\phi_{c_0}$ and $\dot\phi$ is even. In particular
$\langle\dot\phi,\phi'\rangle=0$.
The second hypothesis of the general stability criterion is
\begin{equation}\label{eq:positiveslope}
 p'>0.
\end{equation}
No sign of this derivative is assumed outside the theorem below.

\begin{lemma}\label{lem:coercivity}
Under \eqref{eq:ND}--\eqref{eq:positiveslope}, there is $a>0$ such that
\[
 \langle L_{c_0}z,z\rangle\ge a\|z\|_X^2
 \quad\text{if }\langle J\phi,z\rangle=\langle\phi^{\prime},z\rangle=0.
\]
For sufficiently large $\eta>0$, there is $a_\eta>0$ such that
\begin{equation}\label{eq:penaltycoercivity}
 \langle L_{c_0}h,h\rangle+2\eta\langle J\phi,h\rangle^2
 \ge a_\eta\|h\|_X^2
 \quad\text{if }\langle\phi',h\rangle=0.
\end{equation}
\end{lemma}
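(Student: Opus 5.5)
This is the standard Grillakis--Shatah--Strauss coercivity argument, carried out with the spectral information from Proposition~\ref{prop:index} and the branch identity \eqref{eq:slopeidentity}. First I set up the spectral decomposition. Let $\lambda_-<0$ be the simple negative eigenvalue with normalized eigenfunction $\chi$; $\chi$ is even, since $L_{c_0}$ commutes with reflection and the eigenvalue is simple. By \eqref{eq:ND} the kernel is $\operatorname{span}\{\phi'\}$. Because the essential spectrum is $[c_0,\infty)$ and eigenvalues below $c_0$ are isolated, there is $\delta>0$ with $\langle L_{c_0}w,w\rangle\ge\delta\|w\|_2^2$ for $w\perp\chi,\phi'$ in $L^2$.

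I then upgrade this to the $X$ norm by interpolation. The form satisfies $\langle L_{c_0}w,w\rangle\ge\min\{c_0,1\}\|w\|_X^2-2\|\phi\|_\infty\|w\|_2^2$. Writing the form as $\theta$ times itself plus $(1-\theta)$ times the $L^2$ lower bound, with $\theta$ small, gives $\langle L_{c_0}w,w\rangle\ge a_1\|w\|_X^2$ on that orthogonal complement.

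Next, the key step: positivity on the constrained set $\{\langle J\phi,z\rangle=\langle\phi',z\rangle=0\}$. I use the even function $\dot\phi$, which satisfies $L_{c_0}\dot\phi=-J\phi$ and $\langle L_{c_0}\dot\phi,\dot\phi\rangle=-p'<0$.

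Take $z$ with $\langle J\phi,z\rangle=0$, $\langle\phi',z\rangle=0$ and $z\ne0$. Then $\langle L_{c_0}\dot\phi,z\rangle=-\langle J\phi,z\rangle=0$. Suppose $\langle L_{c_0}z,z\rangle\le0$. On $\operatorname{span}\{\dot\phi,z\}$ the form equals $-p'|s|^2+t^2\langle L_{c_0}z,z\rangle\le0$, so it is nonpositive on a two-dimensional space meeting the kernel trivially. Indeed, $z\perp\phi'$, and $\dot\phi$ is even while $\phi'$ is odd. Decomposing along $\chi$, $\phi'$ and the positive part then gives a contradiction: a two-dimensional subspace modulo the kernel on which the form is $\le0$ forces two nonpositive directions beyond the kernel. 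Made precise, the standard lemma says that the number of nonpositive directions modulo $\ker$ is at most $n_-=1$.

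This shows $\langle L_{c_0}z,z\rangle>0$ for nonzero constrained $z$. To get a uniform constant $a$, I argue by contradiction with a normalized sequence $z_n$ in $X$ whose form values tend to $0$. I decompose $z_n=\alpha_n\chi+w_n$ with $w_n\perp\chi,\phi'$; this is possible since $z_n\perp\phi'$ in the $L^2$ pairing. From $-|\lambda_-|\alpha_n^2+a_1\|w_n\|_X^2\to0$ and boundedness, pass to weak limits: $w_n\rightharpoonup w$ and $\alpha_n\to\alpha$. The constraints pass to the limit, and weak lower semicontinuity of the positive part gives $\langle L_{c_0}z,z\rangle\le0$ for the limit $z=\alpha\chi+w$. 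Hence $z=0$, so $\alpha=0$, and then $\|w_n\|_X\to0$, contradicting normalization.

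For \eqref{eq:penaltycoercivity}, take $h\perp\phi'$ and write $h=z+s\dot\phi$ with $s=\langle J\phi,h\rangle/\langle J\phi,\dot\phi\rangle=\langle J\phi,h\rangle/p'$. Then $z$ satisfies both constraints, using $\langle\dot\phi,\phi'\rangle=0$. Expanding with $\langle L_{c_0}\dot\phi,z\rangle=0$ gives
\[
\langle L_{c_0}h,h\rangle+2\eta\langle J\phi,h\rangle^2=\langle L_{c_0}z,z\rangle+(2\eta p'^2-p')s^2\ge a\|z\|_X^2+p's^2
\]
once $\eta\ge 1/p'$. Since $\|h\|_X\le\|z\|_X+|s|\|\dot\phi\|_X$, this yields the bound with some $a_\eta>0$.

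I expect the main obstacle to be the rigorous counting in the middle step: that the form cannot be nonpositive on $\operatorname{span}\{\dot\phi,z\}$. I would handle it cleanly through the decomposition along $\chi$, using that $\dot\phi$ must have $\langle\dot\phi,\chi\rangle\ne0$, since otherwise the form would be $\ge0$ on it.
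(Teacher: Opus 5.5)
Your proof is correct. Its outer frame is the paper's: the negative direction $\dot\phi$ with $\langle L_{c_0}\dot\phi,z\rangle=-\langle J\phi,z\rangle$, and the splitting $h=z+s\dot\phi$, $s=\langle J\phi,h\rangle/p'$, for \eqref{eq:penaltycoercivity}. The difference lies in how you get strict and uniform positivity on $Z=\{z\in X:\langle J\phi,z\rangle=\langle\phi',z\rangle=0\}$.

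\textbf{The paper's route.} It uses only that no two-dimensional subspace is negative, which gives nonnegativity on $(J\phi)^\perp$. It then rules out null vectors by a constrained-critical-point argument. If $\langle L_{c_0}z,z\rangle=0$, then $\langle L_{c_0}z,w\rangle=0$ for all $w\in Z$, so $L_{c_0}z=\alpha J\phi+\beta\phi'$. Regularity gives $z\in H^3$, and \eqref{eq:ND} together with $L_{c_0}\dot\phi=-J\phi$ gives $z=-\alpha\dot\phi+d\phi'$, which the two pairings annihilate. Uniformity then comes from weak continuity of $\int\phi z^2$ (decay of $\phi$) and weak lower semicontinuity of $Q_{c_0}$.

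\textbf{Your route.} You use the full spectral picture from Proposition~\ref{prop:index} and \eqref{eq:ND}: the eigenfunction $\chi$, the gap at $0$ (isolated since $0<c_0$), and its upgrade to $X$-coercivity on $\{\chi,\phi'\}^\perp$. Strictness then becomes linear algebra: a nonpositive subspace meeting $\ker L_{c_0}$ trivially has dimension at most $n_-=1$. To see this, take the element of $\operatorname{span}\{\dot\phi,z\}$ orthogonal to $\chi$. Its form is nonnegative, hence zero, so its positive spectral component vanishes and it lies in the kernel. The compactness step then reduces to the bounded scalar $\alpha_n=\langle z_n,\chi\rangle$.

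\textbf{Comparison.} Your argument avoids the Lagrange-multiplier and regularity step, but it needs the isolatedness of $0$ and the eigenfunction explicitly. The paper's argument needs only the negative-index count and compactness of the potential.

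Three small points:
\begin{itemize}
\item In the uniformity step you actually have $-|\lambda_-|\alpha_n^2+a_1\|w_n\|_X^2\le\langle L_{c_0}z_n,z_n\rangle\to0$. This is an inequality, not a limit, but it is all you use.
\item State the linear independence of $\dot\phi$ and $z$. It follows from $\langle J\phi,\dot\phi\rangle=p'\neq0$ while $\langle J\phi,z\rangle=0$.
\item The evenness of $\chi$ is never needed.
\end{itemize}
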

\begin{proof}
Write $g=J\phi$, $\zeta=\dot\phi$ and
$q(h,k)=\langle L_{c_0}h,k\rangle$.
Then $q(\zeta,\zeta)=-p'<0$ and
$q(\zeta,z)=-\langle g,z\rangle=0$ for $z\perp g$.
If this vector $z$ were a negative direction, its form-orthogonality to
$\zeta$ would produce a two-dimensional negative subspace, contrary
to Proposition~\ref{prop:index}. Thus $q$ is nonnegative on $g^\perp$.

Let $Z=\{z\in X:\langle g,z\rangle=\langle\phi',z\rangle=0\}$.
If $z\in Z$ and $q(z,z)=0$, nonnegativity and the discriminant of
$q(z+tw,z+tw)$ imply $q(z,w)=0$ for every $w\in Z$.
Thus $L_{c_0}z=\alpha g+\beta\phi'$ in $X^*$.
Pairing with $\phi'$ gives $\beta=0$, since
$L_{c_0}\phi'=0$ and $\langle g,\phi'\rangle=0$ by parity.
The equation first gives $z\in H^3$, since $A_{c_0}z=2\phi z+\alpha g\in L^2$. The identity $L_{c_0}\zeta=-g$ and \eqref{eq:ND} give
$z=-\alpha\zeta+d\phi'$.
Pairing first with $g$ and then with $\phi'$ gives
$\alpha p'=0$ and $d\|\phi'\|_2^2=0$, hence $z=0$.

Strict positivity becomes uniform coercivity by compactness of the
potential term. Otherwise choose $z_n\in Z$ with $\|z_n\|_X=1$
and $q(z_n,z_n)\to0$. After extraction $z_n\rightharpoonup z$ in
$X$ and locally strongly in $L^2$. Decay of $\phi$ implies
$\int\phi z_n^2\to\int\phi z^2$ by the same compact/tail splitting.
Weak lower semicontinuity of $Q_{c_0}$ gives $q(z,z)\le0$.
The preceding positivity forces $z=0$, so the potential integrals tend
to zero. Then $Q_{c_0}(z_n)\to0$, contradicting its equivalence to
$\|z_n\|_X^2$. This proves the first assertion.

For $h\perp\phi'$, set $b=\langle g,h\rangle/p'$ and
$z=h-b\zeta$. Then $z\in Z$ and the cross term vanishes, so
\[
 q(h,h)+2\eta\langle g,h\rangle^2
 =q(z,z)+b^2p'(2\eta p'-1).
\]
Choose $\eta>1/(2p')$. The right side controls
$\|z\|_X^2+b^2$, and hence $\|h\|_X^2$, proving
\eqref{eq:penaltycoercivity}.
\end{proof}

\subsection{The Lyapunov proof}
\begin{theorem}\label{thm:conditionalstability}
Let $\phi_{c_0}$ be a minimizing profile at $c_0>0$. If
\eqref{eq:ND} and \eqref{eq:positiveslope} hold, its translation orbit
$\mathcal O=\{\phi_{c_0}(\cdot-y):y\in\mathbb R\}$ is stable in
$H^{3/2}$ for the real flow. Precisely, for every
$\epsilon>0$ there is $\delta>0$ such that
\[
 \inf_y\|u_0-\phi_{c_0}(\cdot-y)\|_{H^{3/2}}<\delta
 \ \Longrightarrow\
 \sup_{t\in\mathbb R}\inf_y
 \|u(t)-\phi_{c_0}(\cdot-y)\|_{H^{3/2}}<\epsilon.
\]
This is a sufficient criterion for arbitrary speeds. Theorem~\ref{thm:smallspeedstability} verifies its hypotheses for all sufficiently small positive speeds.
\end{theorem}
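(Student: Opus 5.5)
The argument is the standard energy--momentum Lyapunov proof, built on the penalized coercivity \eqref{eq:penaltycoercivity}. Write $\phi=\phi_{c_0}$ and $\mathcal S=\mathcal H+c_0\mathcal P$. Both $\mathcal H$ and $\mathcal P$ are conserved along the real flow, by Proposition~\ref{prop:H} and \eqref{eq:Pconservation}. Theorem~\ref{thm:flow} makes an $H^{3/2}$ datum generate a global solution in $C(\mathbb R;H^{3/2})$, and \eqref{eq:uniformhigher} keeps it bounded there. The Lyapunov functional is
\[
 \mathcal V(u)=\mathcal S(u)-\mathcal S(\phi)+\eta\big(\mathcal P(u)-\mathcal P(\phi)\big)^2 ,
\]
with $\eta$ large as in Lemma~\ref{lem:coercivity}. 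It is conserved, translation invariant, and continuous on $X$, so $\mathcal V(u(t))=\mathcal V(u_0)\le C\delta^2$ whenever $u_0$ is $\delta$-close to the orbit. The last bound holds because $\mathcal S'(\phi)=0$ and $\mathcal V$ is $C^2$ near $\mathcal O$.

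The first step is modulation. By the implicit function theorem applied to $(u,y)\mapsto\langle u(\cdot+y)-\phi,\phi'\rangle$, there are $\rho>0$ and a $C^1$ map $y(u)$ on the tube $U_\rho=\{u:\inf_y\|u-\phi(\cdot-y)\|_X<\rho\}$ with the following property. The function $h=u(\cdot+y(u))-\phi$ satisfies $\langle h,\phi'\rangle=0$ and $\|h\|_X\le C\inf_y\|u-\phi(\cdot-y)\|_X$. Nondegeneracy of this map uses $\langle\phi'',\phi\rangle=-\|\phi'\|_2^2\ne0$. The second step is a lower bound on the tube. Taylor expansion of $\mathcal S$ at the critical point $\phi$, with second variation $\frac12\langle L_{c_0}h,h\rangle$ and cubic remainder $O(\|h\|_X^3)$ from $H^{3/2}\hookrightarrow L^\infty$, gives
\[
 \mathcal V(u)\ge\tfrac12\big(\langle L_{c_0}h,h\rangle+2\eta\langle J\phi,h\rangle^2\big)-C\|h\|_X^3 .
\]
Here $\mathcal P(u)-\mathcal P(\phi)=\langle J\phi,h\rangle+\frac12\langle Jh,h\rangle$, and the square of the quadratic correction is absorbed into the cubic error. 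Then \eqref{eq:penaltycoercivity} gives $\mathcal V(u)\ge\frac{a_\eta}{2}\|h\|_X^2-C\|h\|_X^3\ge\frac{a_\eta}{4}\|h\|_X^2$ for $\|h\|_X$ small.

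The third step is a continuity argument. Given $\epsilon<\rho$, choose $\delta$ so that $C\delta^2<\frac{a_\eta}{4}(\epsilon/2C')^2$, where $C'$ is the constant relating $\|h\|_X$ to the orbit distance. The function $t\mapsto d(t)=\inf_y\|u(t)-\phi(\cdot-y)\|_X$ is continuous, since $u\in C(\mathbb R;X)$. Let $t^*$ be the first time at which $d=\epsilon$. Up to $t^*$ we stay in the tube, so the lower bound and conservation give $d(t)\le C'\|h(t)\|_X<\epsilon/2$, which contradicts $d(t^*)=\epsilon$. Time reversal covers negative $t$.

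\textbf{Main obstacle.} The expected difficulty is regularity and continuity of the flow in $H^{3/2}$. Theorem~\ref{thm:flow} gives persistence and continuity of $H^{3/2}$ regularity, and Proposition~\ref{prop:H} gives conservation of $\mathcal H$ there; the continuity argument relies on both. The precise Taylor remainder for $\mathcal V$ in $X$ also needs checking. I would handle the remainder by writing $\int(\phi+h)^3-\int\phi^3-3\int\phi^2h-3\int\phi h^2=\int h^3$, which is bounded by $\|h\|_\infty\|h\|_2^2$.
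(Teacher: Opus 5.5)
Your proposal is correct and follows essentially the same route as the paper. Both arguments use the penalized conserved functional $\mathcal S_{c_0}(u)-\mathcal S_{c_0}(\phi)+\eta[\mathcal P(u)-\mathcal P(\phi)]^2$, implicit-function modulation through $\langle u(\cdot+y)-\phi,\phi'\rangle=0$, the cubic and quartic Taylor remainder combined with \eqref{eq:penaltycoercivity}, and a first-exit continuity argument; your single $C^1$ map $y(u)$ on the whole tube is more than you need, since only the paper's weaker fact is used: each $u$ near the orbit has some translate with $h\perp\phi'$ and $\|h\|_X\le C\,d(u,\mathcal O)$, obtained locally near every translate by translation invariance.
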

\begin{proof}
The global flow in $X$ is supplied by Theorem~\ref{thm:flow}; both
$\mathcal H$ and $\mathcal P$ are conserved by Proposition~\ref{prop:H}.
Define the translation-invariant functional
\[
 \mathcal F(u)=\mathcal S_{c_0}(u)-\mathcal S_{c_0}(\phi)
             +\eta[\mathcal P(u)-\mathcal P(\phi)]^2.
\]
It is conserved along the flow. Since $\mathcal S_{c_0}'(\phi)=0$,
its first variation vanishes at $\phi$, and its second variation is
exactly the form in \eqref{eq:penaltycoercivity}.
More explicitly, the action remainder is $-\frac13\int h^3$;
in the penalty we use
$\mathcal P(\phi+h)-\mathcal P(\phi)=\langle g,h\rangle+
\frac12\|h\|_{H^{1/2}}^2$.
Sobolev embedding bounds all terms beyond the quadratic part by
$C(\|h\|_X^3+\|h\|_X^4)$. Thus, when $h\perp\phi'$ and
$\|h\|_X$ is small,
\begin{equation}\label{eq:localLyapunov}
 \mathcal F(\phi+h)\ge k\|h\|_X^2
\end{equation}
for some $k>0$. Continuity of the quadratic form and the same expansion
also give $|\mathcal F(\phi+h)|\le K\|h\|_X^2$ for all sufficiently
small $h$, without an orthogonality requirement.

We explain the removal of the translation direction. For $u$ close
to $\phi$ in $X$, solve
\[
 \Psi(u,y):=\langle u(\cdot+y)-\phi,\phi'\rangle=0.
\]
Equivalently $\Psi(u,y)=\langle u,\phi'(\cdot-y)\rangle-
\langle\phi,\phi'\rangle$, so the map is $C^1$ even for
$u\in X$. At $(\phi,0)$ its $y$ derivative is
$\|\phi'\|_2^2>0$. The implicit function theorem gives a small shift
$y(u)$ and a remainder $h=u(\cdot+y(u))-\phi$ satisfying
$h\perp\phi'$. Moreover $|y(u)|\le C\|u-\phi\|_X$.
Since $\phi$ is smooth, translation of $\phi$ is locally Lipschitz
in $X$, so $\|h\|_X\le C\|u-\phi\|_X$.
The construction is uniform near every translate by invariance.

Let $d(u,\mathcal O)=\inf_y\|u-\phi(\cdot-y)\|_X$.
When this distance is small, choose a translate within twice the
distance (the zero-distance case belongs to the closed orbit).
The modulation just described and \eqref{eq:localLyapunov} give
\[
 k_0d(u,\mathcal O)^2\le\mathcal F(u)
 \le K_0d(u,\mathcal O)^2
\]
in a fixed tubular neighborhood of the orbit, with the upper bound
understood from the absolute-value estimate. The orbit is closed:
unbounded translations tend weakly to zero in $X$ and cannot converge
strongly to a translate of its nonzero profile; bounded translations
have convergent subsequences.

The distance to any subset is one-Lipschitz, so
$t\mapsto d(u(t),\mathcal O)$ is continuous. Choose a radius $r$
inside the neighborhood and less than $\epsilon$. If the initial
distance is $\delta$ with $K_0\delta^2<k_0r^2$, a first exit at
distance $r$ would contradict conservation of $\mathcal F$.
The argument applies in both time directions and the flow is global.
Thus the distance remains below $r<\epsilon$ for all $t$.
\end{proof}
The proof separates the general spectral criterion from its verification. Symmetry and the single negative direction are established at every positive speed; the following perturbation supplies the kernel and slope conditions at small speed.
The cubic minimization constraint itself is not conserved by the
evolution; treating it as such would give an invalid stability proof.

\subsection{Verification of the stability conditions at small speed}
The operator $L_c$ is nonlocal even when restricted to even functions.
Parity therefore does not turn its equation into a Sturm--Liouville
boundary-value problem. We instead perturb the integral equation after
the KdV rescaling. The limiting equation is local, and its one-dimensional
kernel can be checked directly by a Wronskian argument.

Put $Y=H^1(\mathbb R;\mathbb R)$ and, for $c\ge0$, define
\[
 R_c=(1-\partial_y^2)^{-1}(1-c\partial_y^2)^{-1/2},\qquad
 T_c=\operatorname{Id}-2R_c(q_c\,\cdot).
\]
Here $q_c$ is the rescaled centered minimizing profile of
Theorem~\ref{thm:kdvlimit}, and
$q_0=\tfrac32\operatorname{sech}^2(y/2)$.
The rescaled profile equation is
\begin{equation}\label{eq:rescaledfixedpoint}
 q_c=R_c(q_c^2).
\end{equation}
At zero speed this is the integrated KdV equation. The useful feature
is operator-norm convergence on the same space $Y$:
\begin{equation}\label{eq:resolventperturbation}
 \|R_c\|_{\mathcal L(Y)}\le1,\qquad
 \|R_c-R_0\|_{\mathcal L(Y)}\le c/2.
\end{equation}
Indeed the difference symbol is bounded in modulus by
$c\eta^2/[2(1+\eta^2)]$. To clarify the operator norm in
\eqref{eq:resolventperturbation}, a scalar Fourier multiplier $T_m$
with bounded symbol $m$ satisfies
\[
 \|T_m\|_{\mathcal L(H^1,H^1)}
 =\operatorname*{ess\,sup}_{\eta\in\mathbb R}|m(\eta)|.
\]
Indeed, the same Sobolev weight occurs in the input and output norms:
\[
 \|T_mf\|_{H^1}^2
 =\int_{\mathbb R}(1+\eta^2)|m(\eta)|^2|\widehat f(\eta)|^2\,d\eta
 \le\|m\|_\infty^2\|f\|_{H^1}^2.
\]
The reverse inequality follows by choosing $\widehat f$ supported on
bounded positive-measure sets where $|m|$ approaches its essential
supremum. The same norm identity holds on the real space $Y$ for
the real even symbols used here, by choosing real even Fourier data.
Thus no additional factor $\langle\eta\rangle$ occurs in the
$H^1\to H^1$ norm. Such a factor would instead give the multiplier
norm from $L^2$ to $H^1$. In particular,
\[
 \|R_c-R_0\|_{\mathcal L(Y,Y)}
 =\sup_{\eta\in\mathbb R}
 \frac{1-(1+c\eta^2)^{-1/2}}{1+\eta^2}
 \le\frac c2,
\]
using $0\le1-(1+z)^{-1/2}\le z/2$ for $z\ge0$.
We perturb this smoothing resolvent, rather
than the unbounded symbol $J_c(1-\partial_y^2)$.

\begin{lemma}\label{lem:limitingkernel}
The operator $T_0$ is Fredholm of index zero on $Y$, with
$\ker T_0=\operatorname{span}\{q_0'\}$. Its restriction to $Y_{\rm even}$
is a bounded bijection. There is $C_0>0$ such that
\begin{equation}\label{eq:limitinggap}
 \|h\|_Y\le C_0\|T_0h\|_Y
 \quad\text{whenever }\langle h,q_0'\rangle_{L^2}=0.
\end{equation}
\end{lemma}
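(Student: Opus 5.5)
The plan has three steps: reduce $T_0$ to a compact perturbation of the identity, identify its kernel by the classical KdV Wronskian argument, and upgrade injectivity to the quantitative bound \eqref{eq:limitinggap} by a compactness contradiction.

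\textbf{Fredholm property.} Write $T_0=\operatorname{Id}-K$ with $K h=2R_0(q_0h)=2(1-\partial_y^2)^{-1}(q_0h)$. I would show that $K$ is compact on $Y$. Multiplication by the smooth, exponentially decaying $q_0$ maps $H^1$ into $H^1$ with decaying weight. By the local Rellich theorem and the tail bound $\sup_{|y|>R}|q_0|\to0$, the map $h\mapsto q_0h$ is compact from $H^1$ to $L^2$; this is the same splitting used in Proposition~\ref{prop:index}. Composing with $(1-\partial_y^2)^{-1}:L^2\to H^2\hookrightarrow H^1$, which is bounded, shows that $K$ is compact on $Y$. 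Hence $T_0$ is Fredholm of index zero. The operator $K$ commutes with reflection because $q_0$ is even, so the same conclusion holds on $Y_{\rm even}$ and $Y_{\rm odd}$.

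\textbf{Kernel.} Suppose $h\in Y$ and $T_0h=0$. Then $h=2(1-\partial_y^2)^{-1}(q_0h)$, and bootstrapping gives $h\in H^\infty$ satisfying the ODE
\[
-h''+h-2q_0h=0 .
\]
Differentiating $-q_0''+q_0=q_0^2$ shows that $q_0'$ solves the same equation and decays like $e^{-|y|}$. For the Wronskian argument, the Wronskian $W=q_0'h'-q_0''h$ is constant. Since both $h$ and $q_0'$ lie in $H^2$, with $h,h'\to0$ and $q_0',q_0''\to0$, we get $W=0$. So $h$ and $q_0'$ are linearly dependent on every interval where $q_0'\ne0$, namely $(-\infty,0)$ and $(0,\infty)$. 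If $h=\alpha q_0'$ on the left and $h=\beta q_0'$ on the right, uniqueness for the second-order ODE with data at a point of $(0,\infty)$ forces $\alpha=\beta$. Equivalently, solutions of a second-order linear ODE form a two-dimensional space, and the second solution grows like $e^{|y|}$ at $+\infty$, so it is excluded. Thus $\ker T_0=\operatorname{span}\{q_0'\}$. Because $q_0'$ is odd, $T_0$ is injective on $Y_{\rm even}$, and Fredholm index zero there makes it a bijection, with bounded inverse by the open mapping theorem.

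\textbf{Gap estimate.} This is the step I expect to need care, because the orthogonality in \eqref{eq:limitinggap} is in $L^2$ rather than in $Y$; I think it is a minor point. I would argue by contradiction. Suppose there are $h_n\perp_{L^2}q_0'$ with $\|h_n\|_Y=1$ and $T_0h_n\to0$. After extraction, $h_n\rightharpoonup h$ in $Y$ and $Kh_n\to Kh$ strongly by compactness. Then $h_n=T_0h_n+Kh_n\to Kh$ strongly, so $h=Kh$, $\|h\|_Y=1$, and $T_0h=0$. Strong $Y$ convergence implies $L^2$ convergence, so the condition $\langle h,q_0'\rangle=0$ passes to the limit. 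Since $h=\lambda q_0'$ by the kernel step, pairing with $q_0'$ gives $\lambda\|q_0'\|_2^2=0$, so $h=0$, contradicting $\|h\|_Y=1$. This yields $C_0$.
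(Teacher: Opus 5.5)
Your proposal is correct and follows essentially the same route as the paper. Both arguments show that $h\mapsto 2R_0(q_0h)$ is compact on $Y$ via local Rellich and the decay of $q_0$, identify $\ker T_0=\operatorname{span}\{q_0'\}$ by the Wronskian argument, and obtain \eqref{eq:limitinggap} by the compactness-contradiction argument, which the paper gives as its equivalent second justification. One small difference: you extend $h=\beta q_0'$ from $(0,\infty)$ to all of $\mathbb R$ by ODE uniqueness at an interior point of $(0,\infty)$, which is slightly more direct than the paper's matching of data at $y=0$ using $q_0''(0)=-3/4\neq0$.
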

\begin{proof}
Multiplication by $q_0$ is compact from $H^1$ to $L^2$: local Rellich
compactness controls a bounded interval, and decay of $q_0$ controls
its complement. Since $R_0:L^2\to H^2$ is bounded, $R_0(q_0\,\cdot)$
is compact on $Y$. Thus $T_0$ is identity minus a compact map, of
Fredholm index zero. The same assertion holds on its invariant even
subspace by the identical compactness argument.

If $T_0h=0$, then $h\in H^2$ and
\[
 -h''+(1-2q_0)h=0.
\]
Differentiating the KdV profile equation gives this equation for $q_0'$.
Both $h,h'$ and $q_0',q_0''$ tend to zero at infinity. Their Wronskian
$h q_0''-h'q_0'$ is constant by the two differential equations,
and its limit at $+\infty$ is zero. On $(0,\infty)$, $q_0'$ never
vanishes; hence $(h/q_0')'=0$ there and $h=a q_0'$ for a constant $a$.
At zero, continuity gives $h(0)=0$ and $h'(0)=a q_0''(0)$;
since $q_0''(0)=-3/4\ne0$, the proportionality constant is fixed
by these initial data. Uniqueness for the second-order ODE therefore
extends $h=a q_0'$ across zero to the entire line.
Thus the kernel is exactly the stated odd translation mode, and the
even restriction is injective. Its index zero makes it surjective,
with bounded inverse by the open mapping theorem.

Finally a Fredholm operator has closed range and induces a bounded
inverse from its range to the quotient by its kernel. Taking the closed
complement $\langle h,q_0'\rangle=0$ gives
\eqref{eq:limitinggap}. Equivalently, a sequence of unit vectors in
that complement with $T_0h_n\to0$ would have a strongly convergent
subsequence by compactness of $\operatorname{Id}-T_0$, whose limit
would be a nonzero kernel vector in the complement, a contradiction.
\end{proof}

\begin{lemma}\label{lem:smallbranch}
For some $c_*>0$, the centered minimizing profiles are unique for
$0<c<c_*$ and their rescalings obey
\begin{equation}\label{eq:branchbounds}
 \|q_c-q_0\|_{H^1}\le C\cdot c,\qquad
 q_c\in C^1((0,c_*);H^1),\qquad
 \|\partial_cq_c\|_{H^1}\le C.
\end{equation}
Uniqueness here concerns centered minimizers, and more generally even
solutions in a fixed $H^1$ neighborhood of $q_0$ after rescaling.
The constants $C$ in \eqref{eq:branchbounds} are uniform for
$0<c<c_*$. They may depend on the chosen threshold $c_*$, the fixed
model normalization, the $H^1$ algebra constant and the limiting inverse
bound, but not on the individual speed in that interval.
\end{lemma}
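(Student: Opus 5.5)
The plan is an implicit function argument for the rescaled fixed-point map on the even subspace, with $c=0$ included as a parameter value. Define
\[
 G:[0,1]\times Y_{\rm even}\to Y_{\rm even},\qquad G(c,q)=q-R_c(q^2).
\]
By \eqref{eq:rescaledfixedpoint}, the rescaled centered profiles satisfy $G(c,q_c)=0$, and $G(0,q_0)=0$. The $H^1$ algebra property and $\|R_c\|_{\mathcal L(Y)}\le1$ make $G$ smooth in $q$, uniformly in $c$. The derivative in $q$ is
\[
 \partial_qG(c,q)h=h-2R_c(qh).
\]
At $(c,q)=(c,q_c)$ this is $T_c$, and at $(0,q_0)$ it is $T_0$. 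By Lemma~\ref{lem:limitingkernel}, $T_0$ is invertible on $Y_{\rm even}$.

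\textbf{Step 1: uniform invertibility.} Using \eqref{eq:resolventperturbation}, I would bound
\[
 \|\partial_qG(c,q)-T_0\|\le 2\|R_c-R_0\|\,\|q\|_Y+2\|q-q_0\|_Y\le C(c+\|q-q_0\|_Y).
\]
A Neumann series then gives a uniform inverse bound in a ball $\{c<c_*,\ \|q-q_0\|_Y<\rho\}$.

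\textbf{Step 2: quantitative contraction.} Rewrite $G(c,q)=0$ as the fixed-point problem
\[
 q=q-T_0^{-1}G(c,q).
\]
On that ball this map is a contraction, so for each $c<c_*$ it has a unique fixed point there. Local uniqueness of even solutions near $q_0$ follows. For the rate, the residual is
\[
 \|G(c,q_0)\|_Y=\|(R_0-R_c)(q_0^2)\|_Y\le \tfrac c2\|q_0^2\|_Y,
\]
and the contraction estimate converts this into $\|q_c-q_0\|_{H^1}\le Cc$.

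\textbf{Step 3: identifying the fixed point with the minimizers.} By Theorem~\ref{thm:kdvlimit}, every centered minimizer has $q_c\to q_0$ in $H^1$, and $q_c$ is even by Theorem~\ref{thm:symmetry}. Shrinking $c_*$ if necessary, every centered minimizer lies in the uniqueness ball, so it equals the fixed point. This yields uniqueness of centered minimizers.

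I expect this to be the main obstacle, because Theorem~\ref{thm:kdvlimit} gives convergence along the whole family but no explicit rate. The plan is to argue by contradiction. If uniqueness failed along a sequence $c_n\downarrow0$, there would be centered minimizers outside the ball. This contradicts the convergence $w_c\to w_0$, which holds for every choice of minimizers. A single threshold $c_*$ therefore works.

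\textbf{Step 4: differentiability and the derivative bound.} The map $c\mapsto R_c\in\mathcal L(Y)$ is differentiable for $c>0$, with derivative the multiplier
\[
 -\tfrac12\eta^2(1+\eta^2)^{-1}(1+c\eta^2)^{-3/2}.
\]
Its sup-norm is only $\lesssim c^{-1}$, so differentiating the operator alone would lose uniformity. Instead I would apply it to $q^2\in H^2$. Since $q_c=R_c(q_c^2)$, bootstrapping gives $q_c\in H^3$ uniformly, so $q_c^2$ is bounded in $H^2$ uniformly in $c$. The symbol bound $\eta^2(1+\eta^2)^{-1}\le1$, combined with the extra weight $\langle\eta\rangle^{2}$ available from the $H^2$ input, then gives
\[
 \|\partial_cR_c(q_c^2)\|_{H^1}\le C\|q_c^2\|_{H^1}.
\]
The implicit function theorem at each $c>0$ gives $q_c\in C^1((0,c_*);H^1)$, with
\[
 \partial_cq_c=T_c^{-1}\big[(\partial_cR_c)(q_c^2)\big].
\]
The uniform bound on $T_c^{-1}$ from Step 1 then gives $\|\partial_cq_c\|_{H^1}\le C$.
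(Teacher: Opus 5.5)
Your proposal is correct and essentially reproduces the paper's proof. After writing $q=q_0+h$, your map $q\mapsto q-T_0^{-1}G(c,q)$ is exactly the contraction \eqref{eq:branchcontraction}. Step 3 is the same contradiction argument with Theorem~\ref{thm:kdvlimit}, and Step 4 is the same implicit-function argument at each $c>0$ with $\mathcal E_c\partial_cq_c=R_c'(q_c^2)$.

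One correction, which does not affect your conclusion. The multiplier $\tfrac12\eta^2(1+\eta^2)^{-1}(1+c\eta^2)^{-3/2}$ is bounded by $\tfrac12$ uniformly in $c$, not merely $\lesssim c^{-1}$. Hence $\|R_c'\|_{\mathcal L(Y)}\le\tfrac12$ directly, and your uniform $H^3$ bootstrap of $q_c$ is unnecessary; the bound $\|\partial_cR_c(q_c^2)\|_{H^1}\le C\|q_c^2\|_{H^1}$ you end with is true for this simpler reason.
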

\begin{proof}
Let $\mathcal E_0=T_0|_{Y_{\rm even}}$, which has a bounded inverse by
Lemma~\ref{lem:limitingkernel}. Write $q=q_0+h$. Since $Y$ is an
algebra, the equation $q=R_cq^2$ is equivalent to
\begin{equation}\label{eq:branchcontraction}
 h=\mathcal E_0^{-1}\big[(R_c-R_0)q_0^2
           +2(R_c-R_0)(q_0h)+R_c(h^2)\big].
\end{equation}
On a fixed sufficiently small ball $\|h\|_Y\le\rho$, the difference
of the right sides for $h,k$ is bounded by
$C(c+\rho)\|h-k\|_Y$, using \eqref{eq:resolventperturbation} and
the algebra estimate. Choose $\rho$ and then $c_*$ so this constant
is below $1/2$. The value at $h=0$ has norm at most $C\cdot c$.
Shrinking $c_*$ again makes the map preserve the ball and gives a
unique even fixed point. Comparing this fixed point with zero in
\eqref{eq:branchcontraction} yields $\|h\|_Y\le C\cdot c$.
The convergence in Theorem~\ref{thm:kdvlimit} puts every centered
minimizer in this ball for all sufficiently small $c$. Otherwise a
sequence of excluded minimizers at speeds tending to zero would
contradict that theorem. Thus all centered minimizers coincide with
the fixed point just constructed.

For each positive $c$, $R_c$ is continuously differentiable in operator
norm, with derivative symbol
\[
 r_c'(\eta)=-\frac{\eta^2}{2(1+\eta^2)(1+c\eta^2)^{3/2}},
 \qquad \|R_c'\|_{\mathcal L(Y,Y)}\le\tfrac12.
\]
Here $\mathcal L(Y)=\mathcal L(Y,Y)$ denotes the bounded operators on
$Y=H^1(\mathbb R;\mathbb R)$; the displayed norm is an operator
norm, not an $L^2$ norm of the symbol. By the same multiplier-norm
identity used after \eqref{eq:resolventperturbation},
\[
 \|R_c'\|_{\mathcal L(Y,Y)}
 =\sup_{\eta\in\mathbb R}
 \frac{\eta^2}{2(1+\eta^2)(1+c\eta^2)^{3/2}}
 \le\tfrac12.
\]
Again, multiplying the symbol by $\langle\eta\rangle$ would measure
an $L^2\to H^1$ norm, which is not the norm required here.
Local operator-norm continuity of the derivative follows, for example,
by bounding its second derivative on any compact positive-speed
interval. No operator-norm differentiability at $c=0$ is needed.
The equation derivative on even $Y$ is
$\mathcal E_c=\operatorname{Id}-2R_c(q_c\,\cdot)$, and
\eqref{eq:resolventperturbation} and $\|q_c-q_0\|_Y\le C\cdot c$ give
$\|\mathcal E_c-\mathcal E_0\|\le C\cdot c$. Its inverse is uniformly bounded by the
Neumann series. The implicit function theorem at every $c>0$ therefore
gives the $C^1$ branch, with overlapping branches identified by local
uniqueness. Differentiation yields
\[
 \mathcal E_c\partial_cq_c=R_c'(q_c^2).
\]
The uniform inverse bound, derivative bound and $H^1$ algebra estimate
give the last inequality in \eqref{eq:branchbounds}.
\end{proof}

\begin{theorem}
\label{thm:smallspeedstability}
There exists $c_*>0$ such that, for every $0<c<c_*$, the centered
minimizing profile is unique, its linearized action satisfies
\begin{equation}\label{eq:smallND}
 \ker L_c=\operatorname{span}\{\phi_c^{\prime}\},
\qquad \phi_c^{\prime}=\partial_x\phi_c.
\end{equation}
and along the resulting branch
\begin{equation}\label{eq:smallslope}
 \frac{d}{dc}\mathcal P(\phi_c)
 =\frac92\sqrt c+O(c^{3/2})>0.
\end{equation}
Its translation orbit is therefore orbitally stable in $H^{3/2}$.
The threshold $c_*$ is one fixed positive number for the normalized
model, chosen from the uniform resolvent, algebra, inverse and slope
bounds in the proof; it does not depend on the individual speed
$c\in(0,c_*)$. No explicit optimal value is claimed.
For each fixed such $c$ and each prescribed orbit tolerance $\varepsilon>0$,
the admissible perturbation radius $\delta=\delta(c,\varepsilon)>0$
may depend on $c$. No radius uniform as $c\downarrow0$ is asserted.
Uniqueness of all positive profiles, or stability at arbitrary positive
speeds, is not asserted.
\end{theorem}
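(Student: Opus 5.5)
The plan has three parts: obtain uniqueness and the branch from Lemma~\ref{lem:smallbranch}, transfer the kernel of $T_0$ to $L_c$ by perturbation and rescaling, and compute the slope with the derivative bound of \eqref{eq:branchbounds}. Theorem~\ref{thm:conditionalstability} then gives orbital stability. Take $c_*$ at most the threshold of Lemma~\ref{lem:smallbranch}. Uniqueness of centered minimizers is then already proved there, and the branch $c\mapsto q_c$ is $C^1$ into $H^1$ with $\|q_c-q_0\|_{H^1}\le Cc$.

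\textbf{Kernel.} Under $x=y/\sqrt c$ and $h(x)=k(\sqrt c\,x)$, the equation $L_ch=0$ becomes $c^{3/2}J_c(1-\partial_y^2)k=2c\,q_ck$ with $J_c=(1-c\partial_y^2)^{1/2}$. Inverting the multiplier turns this into $k=2R_c(q_ck)$, that is $T_ck=0$ with $T_c=\operatorname{Id}-2R_c(q_c\cdot)$ on $Y$. A kernel element of $L_c$ lies in $H^3$, so $k\in Y$. Split $k$ into even and odd parts; $T_c$ preserves parity because $q_c$ is even.
\begin{itemize}
\item On even functions, $\|T_c-T_0\|\le 2\|R_c-R_0\|\|q_c\|+2\|q_c-q_0\|\le Cc$ by \eqref{eq:resolventperturbation} and the algebra bound. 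Since $T_0|_{Y_{\rm even}}$ is invertible by Lemma~\ref{lem:limitingkernel}, a Neumann series shows $T_c$ is injective there for small $c$.
\item On odd functions, $q_c'$ lies in the kernel. If $k$ is odd, subtract the multiple of $q_c'$ that makes it $L^2$-orthogonal to $q_0'$; this is possible since $\langle q_c',q_0'\rangle\to\|q_0'\|^2\neq0$, which needs $q_c'\to q_0'$ in $L^2$, true by $H^1$ convergence. The resulting $\tilde k$ satisfies $T_c\tilde k=0$ and $\tilde k\perp q_0'$. Then \eqref{eq:limitinggap} gives $\|\tilde k\|\le C_0\|T_0\tilde k\|=C_0\|(T_0-T_c)\tilde k\|\le C_0Cc\|\tilde k\|$, so $\tilde k=0$.
\end{itemize}
Undoing the scaling yields \eqref{eq:smallND}.

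\textbf{Slope.} From the scaling formula in the proof of Theorem~\ref{thm:kdvlimit},
\[
2\mathcal P(\phi_c)=c^{3/2}G(c),\qquad G(c)=\int\sqrt{1+c\eta^2}\,|\widehat q_c|^2\,d\eta .
\]
Differentiating,
\[
\frac{d}{dc}2\mathcal P=\tfrac32c^{1/2}G(c)+c^{3/2}G'(c).
\]
The quantity $G'$ has two contributions.
\begin{itemize}
\item The explicit one is $\int\frac{\eta^2}{2\sqrt{1+c\eta^2}}|\widehat q_c|^2\le\frac12\|q_c\|_{H^1}^2$, which is bounded.
\item The other is $2\operatorname{Re}\int\sqrt{1+c\eta^2}\,\widehat q_c\,\overline{\partial_c\widehat q_c}$. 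This is the delicate term, since $\sqrt{1+c\eta^2}$ grows like $\sqrt c|\eta|$. Bound it by $\int\langle\eta\rangle|\widehat q_c||\partial_c\widehat q_c|\le\|q_c\|_{H^1}\|\partial_cq_c\|_{L^2}$, which is bounded using $c\le1$ and \eqref{eq:branchbounds}.
\end{itemize}
So $G'=O(1)$. Also $G(c)=\|q_c\|_2^2+O(c)$ with $\|q_c\|_2^2=6+O(c)$ from the rate $\|q_c-q_0\|_{H^1}\le Cc$. Together these give $\frac{d}{dc}\mathcal P=\tfrac34\sqrt c\cdot 6+O(c^{3/2})=\tfrac92\sqrt c+O(c^{3/2})$. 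This is positive after shrinking $c_*$.

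One point needs care: the derivative along the rescaled branch must equal $p'$ of \eqref{eq:slopeidentity}. Both are computed on the same locally unique even branch. The $H^1$ branch $q_c$ lifts to an $H^3$ branch of $\phi_c$ by the bootstrap, so the two derivatives agree.

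Finally, with \eqref{eq:ND} and \eqref{eq:positiveslope} verified and $\phi_c$ a minimizer, Theorem~\ref{thm:conditionalstability} gives orbital stability in $H^{3/2}$. The main obstacle is the uniformity in the slope step: the $H^1$ derivative bound must control $G'$ despite the growing weight, and the pairing estimate above is the first thing to try.
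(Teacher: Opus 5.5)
Your proposal is correct and follows the paper's proof essentially step by step. It takes uniqueness and the branch from Lemma~\ref{lem:smallbranch}, transfers the kernel from $T_0$ via the gap \eqref{eq:limitinggap} once the $q_0'$-component is removed, and obtains the slope from $2\mathcal P(\phi_c)=c^{3/2}G(c)$ with $G=6+O(c)$, $G'=O(1)$. The paper removes the $q_0'$-component by a codimension count on all of $Y$, so your even-part Neumann step is harmless but unnecessary.

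Two small corrections:
\begin{itemize}
\item The rescaled kernel equation is $c\,J_c(1-\partial_y^2)k=2c\,q_ck$, not $c^{3/2}$ on the left. Your conclusion $T_ck=0$ is nevertheless the right one.
\item The paper identifies the branches in the opposite direction: it rescales the physical $H^3$ implicit-function branch into the $H^1$ ball of Lemma~\ref{lem:smallbranch} and invokes contraction uniqueness, which needs only continuity of dilations on $H^1$. Your ``lift by the bootstrap'' would additionally require continuity of $c\mapsto c\,q_c(\sqrt c\,\cdot)$ in $H^3$, not just membership of each $\phi_c$ in $H^3$.
\end{itemize}
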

\begin{proof}
The preceding lemma gives uniqueness among centered minimizers and
the branch estimates. On the full space $Y$, the same algebra argument
gives
\[
 \|T_c-T_0\|_{\mathcal L(Y)}\le C\cdot c.
\]
Let $Z_0=\{h\in Y:\langle h,q_0'\rangle=0\}$. If $T_ch=0$ and
$h\in Z_0$, then \eqref{eq:limitinggap} gives
\[
 \|h\|_Y\le C_0\|(T_0-T_c)h\|_Y\le C_0C\cdot c\|h\|_Y.
\]
Shrink $c_*$ so that $C_0C\cdot c_*<1$. Then the preceding
inequality forces $h=0$ for every $0<c<c_*$. A kernel of dimension at least
two would have a nonzero intersection with this codimension-one
space. Thus $\dim\ker T_c\le1$. The smooth profile equation and
translation invariance give $T_cq_c'=0$, with $q_c'\ne0$.
Consequently $\ker T_c=\operatorname{span}\{q_c'\}$.

For $h(x)=k(\sqrt c\,x)$, the rescaled equation $L_ch=0$ is
\[
 [(1-c\partial_y^2)^{1/2}(1-\partial_y^2)-2q_c]k=0,
\]
equivalently $T_ck=0$. An $L^2$ kernel vector of $L_c$ belongs to
its domain $H^3$, hence to $Y$ after scaling. Conversely, for every fixed $c>0$, $R_c:L^2\to H^3$ is bounded,
because its symbol satisfies
$\langle\eta\rangle^3|r_c(\eta)|
=\langle\eta\rangle/(1+c\eta^2)^{1/2}
\le\max\{1,c^{-1/2}\}$.
A $Y$ kernel vector obeys $k=2R_c(q_ck)$; as $q_c\in L^\infty$
and $k\in L^2$, this identity places $k$ in $H^3$, the domain
of the rescaled multiplier. This smoothing bound need not be uniform
as $c\downarrow0$.
Thus the kernels correspond, proving \eqref{eq:smallND}.
The physical even implicit-function construction following
\eqref{eq:ND} now also applies at each of these speeds. We identify
its branch with the rescaled branch explicitly. Fix
$\bar c\in(0,c_*)$, and denote its local $C^1$ even $H^3$ branch by
$\Phi(c)$, with $\Phi(\bar c)=\phi_{\bar c}$. Define
\[
 \widetilde q(c)(y)=c^{-1}\Phi(c)(y/\sqrt c).
\]
Dilations $f\mapsto f(\cdot/a)$ are strongly continuous on $H^1$
for $a>0$ and locally uniformly bounded there: their squared $L^2$
and derivative norms are respectively $a\|f\|_2^2$ and
$a^{-1}\|f'\|_2^2$. Strong continuity follows first for smooth
compactly supported functions and then by density. Since $\Phi$ is
continuous in $H^3$, $\widetilde q$ is continuous in $H^1$ near
$\bar c$. It is even, satisfies
$\widetilde q(c)=R_c(\widetilde q(c)^2)$, and equals $q_{\bar c}$
at $c=\bar c$.
Choose the contraction parameters in Lemma~\ref{lem:smallbranch}
so that $\|q_c-q_0\|_{H^1}<\rho/2$ throughout $(0,c_*)$.
Continuity then keeps $\widetilde q(c)$ in that lemma's fixed
$\rho$-ball around $q_0$ for $c$ near $\bar c$.
Local uniqueness gives $\widetilde q(c)=q_c$, hence
\[
 \Phi(c)(x)=c q_c(\sqrt c\,x).
\]
Thus the scalar function $\mathcal P(\Phi(c))$ is exactly the one
computed from the rescaled branch. Its derivative is therefore the
slope entering Theorem~\ref{thm:conditionalstability}; no differentiability
of the dilation map on arbitrary $H^1$ functions is being assumed.

Set
\[
 M(c)=\int\sqrt{1+c\eta^2}|\widehat q_c(\eta)|^2d\eta,
 \qquad \mathcal P(\phi_c)=\tfrac12c^{3/2}M(c).
\]
The estimate $\|q_c-q_0\|_{H^1}\le C\cdot c$ gives
$\|q_c\|_2^2=6+O(c)$. Also
\[
 0\le M(c)-\|q_c\|_2^2
 \le\tfrac c2\|q_c'\|_2^2,
\]
so $M(c)=6+O(c)$. To control its derivative, differentiate the
quadratic form on $H^1$; the weights are bounded by $1+\eta^2$
for $0<c\le1$, and their derivatives by $\eta^2/2$.
The Fourier transform, viewed as a bounded linear map from $H^1$
to the corresponding weighted $L^2$ space, commutes with the branch
derivative:
\[
 \partial_c\bigl(\mathcal F q_c\bigr)
 =\mathcal F\bigl(\partial_c q_c\bigr).
\]
In hat notation this is $\partial_c\widehat{q_c}
=\widehat{\partial_cq_c}$, with no complex conjugation.
The $C^1$ branch and the preceding bounds justify
\begin{align*}
 M'(c)={}&\frac12\int_{\mathbb R}\frac{\eta^2}{\sqrt{1+c\eta^2}}
                       |\widehat{q_c}(\eta)|^2\,d\eta\\
 &+2\operatorname{Re}\int_{\mathbb R}\sqrt{1+c\eta^2}
       \widehat{q_c}(\eta)\,
       \overline{\partial_c\widehat{q_c}(\eta)}\,d\eta.
\end{align*}
The first integral is bounded by $\|q_c'\|_2^2/2$; weighted
Cauchy--Schwarz and \eqref{eq:branchbounds} bound the second uniformly.
Hence $M'(c)=O(1)$, and
\[
 \frac{d}{dc}\mathcal P(\phi_c)
 =\tfrac34\sqrt c\,M(c)+\tfrac12c^{3/2}M'(c)
 =\tfrac92\sqrt c+O(c^{3/2}).
\]
This is positive for sufficiently small $c>0$.
Proposition~\ref{prop:index} supplies the single negative direction,
and \eqref{eq:smallND} and \eqref{eq:smallslope} verify both hypotheses
of Theorem~\ref{thm:conditionalstability}. The asserted orbital stability
follows from its conserved Lyapunov argument.
\end{proof}
The perturbation verifies the stability assumptions rather than merely
differentiating the earlier asymptotic for $\mathcal P$. Its essential ingredients
are norm convergence of the smoothing resolvent, the KdV limiting kernel,
and the bounded derivative of the rescaled branch. No Sturm comparison
for the nonlocal finite-speed operator has been used.

\section{Conclusions and open questions}
The stability theorem rests on three verified properties of minimizing
profiles: one negative direction, a one-dimensional translation kernel,
and a positive derivative of the quadratic invariant. The variational
construction establishes the negative index at every positive speed.
The KdV limit and smoothing-resolvent perturbation establish the kernel
and slope properties for all sufficiently small positive speeds, and
also give uniqueness of centered minimizers in that regime. The global,
unconditionally unique $H^{1/2}$ flow, its higher-regularity persistence,
and the conserved penalized action then give orbital stability in
$H^{3/2}$.

The fixed-speed construction complements the prescribed-mass nonlocal
variational theory discussed in the introduction. The next questions
are whether the kernel and slope conditions persist at larger speeds,
and whether uniqueness extends to every positive profile. For the
evolution, local existence holds in $L^2$ and unconditional uniqueness
holds for $s\ge1/4$; extending global existence below $H^{1/2}$ requires
an a priori bound beyond the conserved quadratic invariant. Asymptotic
stability remains a separate dynamical question.

\end{document}